\def\R{\mathcal{R}}
\newtheorem{df}{Definition}[section]
\newtheorem{thm}[df]{Theorem}
\newtheorem{pro}[df]{Proposition}
\newtheorem{cor}[df]{Corollary}
\newtheorem{rema}[df] {Remark}
\begin{document}
\setcounter{page}{1}

\title[The inverse along an element]{The inverse along an element in rings}

\author[J. Ben\'{\i}tez and E. Boasso]{Julio Ben\'{\i}tez and Enrico Boasso}

\begin{abstract}In this article several properties of the inverse along an element will be studied in the context of unitary
rings. New characterizations of the existence of this inverse will be proved. Moreover, the set of
all invertible elements along a fixed element will be fully described. Futhermore, commuting inverse
along an element will be characterized. The special cases of the group inverse, the (generalized) Drazin inverse
and the Moore-Penrose inverse (in rings with involutions) will be also considered.  \par
\medskip
\noindent  Keywords: Inverse along an element; Drazin inverse; Generalized Drazin inverse; Group
inverse; Moore-Penrose inverse; Unitary ring\par
\medskip
\noindent AMS classification: 15A09; 16U99; 16W10\end{abstract}
\maketitle

\section{Introduction}

\noindent In \cite{M} a notion of an inverse along an element was introduced. This inverse has the advantage
that it encompasses several well known generalized inveses such as the group inverse, the Drazin inverse
and the Moore-Penrose inverse. The aforementioned inverse was studied by several authors (see for example \cite{CKM, KM, M, M2, MP, ZCP}).\par

\indent The main objective of this article is to study several properties of the inverse along an element in unitary rings.   In section 3,
after having recalled some preliminary definitions and facts in section 2, more equivalent conditions that assure the existence
of the inverse under consideration will be given. In addition, in section 4 more characterizations of this inverse will be proved.
In section 5, the set of all invertible elements along a fixed element will be fully described.
Furthermore, the reverse order law and the special cases of 
the group inverse, the (generalized) Drazin inverse and the Moore-Penrose inverse (in the presence of an involution) will be 
considered. In section 6 commuting inverse along an element will be characterized. In particular, a characterization of group invertible elements
through the inverse along an element will be presented. Finally, in section 7 inverses along  elements that are also 
inner inverses will be considered.

\section{Preliminary definitions and facts}

\noindent From now on $\R$ will denote a unitary ring with unit 1. Let $\R^{-1}$ be the set of invertible elements of $\R $ and denote by
 $\R^\bullet$ the set of idempotents of $\R$, i.e., 
$\R^\bullet=\{p\in\R\colon p^2=p\}$. Given $a\in\R$, the following notation will be used:
$a\R=\{ax\colon x\in\R\}$, $\R a=\{xa\colon x\in\R\}$, $a^{-1} (0)=\{x\in\R\colon ax=0\}$,  
$a_{-1} (0)=\{x\in\R\colon xa=0\}$.\par

\indent Recall that $a\in\R$ is said to be \it regular\rm, if there is $z\in\R$ such that $a=aza$.
In addition, the element $z$ will be said to be an \it inner \rm or a  \it generalized inverse \rm of $a$.
The set of regular elements of $\R$ will be denoted by $\hat{\R}$. 
Next follows one of the main 
definitions of this article.\par

\begin{df} Given a unitary ring $\R$ and $a\in\R$, an element $b\in\R$ is said to be an outer inverse of $a$,
if $b=bab$.\end{df}

\indent The following facts will be used in the article.\par

\begin{rema}\label{rema1}\rm Let $\R$ be a unitary ring and consider $a$, $b\in\R$ such that $b$
is an outer inverse of $a$. Then, the following statements can be easily proved:\par
\noindent (i) $ab$, $ba\in\R^\bullet$.\par
\noindent (ii) $bR=ba\R$, $\R ab=\R b$.\par
\noindent (iii) $b^{-1}(0)=(ab)^{-1}(0)$, $b_{-1}(0)=(ba)_{-1}(0)$.\par
\noindent (iv) When $b$ is also an inner inverse of $a$, i.e., $a=aba$, it not difficult to prove that  the following statements are equivalent:\par
\indent (1) $a=aba$,\par
\indent (2) $\R=b\R \oplus a^{-1} (0)$,\par
\indent (3) $\R=\R b \oplus a_{-1}(0)$.
\end{rema}

\indent Next the definition of the key notion of this article will be recalled (see \cite[Definition 4]{M}).\par
 
\begin{df} \label{def2}Let $\R$ be a unitary ring and consider $a$, $d\in\R$. The element $a$ will be said to be invertible along $d$, if there exists $b\in\R$ such that
 $b$ is an outer inverse of $a$ and $b\R=d\R$ and $\R b=\R d$.\end{df}

\indent Recall that, in the conditions of Definition \ref{def2}, according to \cite[Theorem 6]{M}, if such $b\in\R$ exists, then it is unique.
Therefore, the element $b$ satisfying  Definition \ref{def2} will be said to be the \it inverse of $a$ along $d$. \rm
In this case, the inverse under consideration will be denoted by $a^{\parallel d}$. Note that if $\tilde{d}\in \R$ is such that
$d\R= \tilde{d}\R$ and $\R d=\R \tilde{d}$, then $a$ is invertible along $d$ if and only if $a$ is invertible along $\tilde{d}$,
in addition, in this case $a^{\parallel d}=a^{\parallel \tilde{d}}$. In particular, given $d\in\R^{-1}$, necessary and sufficient for $a\in\R$ to be invertible along $d$ is that $a\in \R^{-1}$; moreover, in this case $a^{\parallel d}=a^{-1}$. In fact, since $1\R=d\R$ and $\R 1=\R d$,
$a$ is invertible alonf $d$ if and only if $a$ is invertible along $1$, which is equivalent to $a\in\R^{-1}$, and in this case
$a^{\parallel 1}=a^{-1}$. 

\indent Moreover, according to \cite[p. 3]{MP}, if $a^{\parallel d}$ exists, then $d$ is regular. 
In particular, if $d$ is not regular, no $a\in\R$ has an inverse along $d$. Thus, without loss of generality it will be assumed that
$d\in\hat{\R}$. However, more restriction on the element $d$ can be stated, as the next remak will show.\par

\begin{rema}\label{rema7}\rm Let $\R$ be a unitary ring and consider $d\in\hat{\R}$. Note that if $d=0$, then any $a\in\R$ 
is invertible along $0$; in fact, in this case $a^{\parallel d}=0$. Then suppose that $d\neq 0$ and that there exists $\overline{d}\in\R$ such that
$d\overline{d}d=d$. Thus, $d(1-\overline{d}d)=0=(1-d\overline{d})d$. Therefore, if $d\in\R$ is not a zero divisor (there is no $z\in \R$, $z\neq 0$, such that
$zd=0$ or $dz=0$), then $d\in \R^{-1}$. Consequently, in the general case, it is possible to assume that $d\in\hat{\R}\setminus (\R^{-1}\cup \{0\})$, with $d$
a zero divisor.
However, if the ring $\R$ has no zero divisors, for example if $\R$ is a field, an integral domain or a polynomial ring over an integral domain,
an inverse along an element $d\in\R$ exists if and only if $d\in \R^{-1}\cup \{0\}$, in which case this situation has been characterized.
\end{rema}

\indent Next follow the definitions of several generalized inverses such as the group inverse, the (generalized) Drazin inverse and 
the Moore-Penrose inverse. These classes of invertible elements are particular cases of  the inverse studied in this article.\par

\indent Let $\R$ be a unitary ring and consider $a\in \R$. The element $a\in\R$ will be said to be \it group invertible, \rm if there exists a (necessarily unique)
$b\in \R$ such that 
$$a=aba,\hskip.3truecm b=bab,\hskip.3truecm  ab=ba
$$ 
\noindent (see for example \cite{RS}). When $a\in \R$ is group invertible, its group inverse will be denoted by $a^\sharp$. Clearly, $a^\sharp$ is group invertible and $(a^\sharp)^\sharp=a$. According to  \cite[Theorem 11]{M}, necessary and sufficient for $a\in \R$ to be group invertible is that $a$ is invertible along $a$,
what is more, in this case $a^\sharp=a^{\parallel a}$. Next some of the main properties of group invertible elements will be recalled. To this end, let $\R^\sharp$  stand for the set of all group invertible elements of the ring $\R$. In addition, recall that if $\R$ is  unitary ring and $p\in \R^\bullet$, then $p\R p$ is a subring of $\R$ with unit $p$. 

\begin{rema}\label{rema13}\rm Let $\R$ be a unitary ring and consider $a\in\R^\sharp$. \par
\noindent (i). Note that 
\begin{align*}
a\R&=aa^\sharp \R=a^\sharp a \R= a^\sharp \R,\\
\R a&=\R aa^\sharp =\R a^\sharp a =\R a^\sharp,\\
aa^\sharp \R aa^\sharp&=a^\sharp a \R a^\sharp a=a \R a=a^\sharp \R a^\sharp.\\
\end{align*}
\noindent (ii). Recall that according to \cite[Lemma 3]{RS}, $a\in\R^{\sharp}$ if and only if
there exists $p\in\R^\bullet$ such that $a+p\in\R^{-1}$, $ap=pa=0$. Now, using this result, it is not difficult to prove that
necessary and sufficient for $a$ to be group invertible is that there exists $p\in  \R^\bullet$
such that $a=(1-p)a(1-p)$ and $a\in ((1-p)\R (1-p))^{-1}$. In this case, if $b\in ((1-p)\R (1-p))^{-1}$
is such that $ab=ba=1-p$, then $a^\sharp=b$. Moreover, 
according to \cite[Corollary 2]{RS}, the idempotent involved in the definition of a group invertible element is unique, and if it is denoted by 
$p_a$, then  $p_a=1-a^\sharp a$. In particular, $p_a=p_{a^\sharp}$.\par
\noindent (iii). Let $a\in \R$ and suppose that $a$ has a commuting generalized inverse $b$,
i.e., $aba=a$ and $ab=ba$. Then, it is not difficult to prove that $a\in\R^\sharp$ and $a^\sharp=bab$. \par
\noindent (iv). Let $a\in\R^\sharp$ and consider $n\in\mathbb{N}$. Then, an easy calculation proves that
$a^n\in\R^\sharp$, $(a^n)^\sharp=(a^\sharp)^n$ and $p_{a^n}=p_a$.
\end{rema}

\indent Let $\R$ be a unitary ring and consider $a\in\R$. The element $a$ is said to be \it Drazin invertible, \rm
if there exists a (necessarily unique) $x\in \R$ such that 
$$
a^mxa=a^m,\, xax=x,\, ax=xa,
$$
for some $m\in\mathbb{N}$ (see for example \cite{D, RS}). In this case, the solution of these equations will be denoted by $a^d$
and $\R^D$ will stand for the set of all Drazin invertible elements of $\R$. In addition,
the smallest $m$ for which the above equations hold is called the \it Drazin index  \rm of $a$
and it will be denoted by ind$(a)$. Note that ind$(a)=1$ if and only if $a$ is group invertible. On the
other hand, it is not difficult to prove that if $a\in \R^D$ and ind$(a)=k$, then $a^k\in\R^\sharp$,
$(a^k)^\sharp=(a^d)^k$ and $p_{a^k}=1-aa^d$. Furthermore, according to \cite[Theorem 11]{M},
$a\in\R$ is Drazin invertible if and only if $a$ is invertible along $a^m$, for some $m\in\mathbb{N}$.
Moreover, in this case, $a^d=a^{\parallel a^m}$.\par

\indent Next the definition of generalized Drazin invertible elements will be recalled. 
However, to this end some preparation is needed.\par

\indent Given $\R$ a unitary ring, an element $a\in\R$ is said to be \it quasinilpotent\rm,
if for every $x\in$ comm$(a)$, $1+xa\in\R^{-1}$, where comm$(a)=\{x\in\R\colon ax=xa\}$
(see \cite[Definition 2.1]{KP}). The set of all quasinilpotent elements of $\R$ will be denoted by $\R^{qnil}$.
Note that if $\R^{nil}$ denotes the set of nilpotent elements of $\R$, then  $\R^{nil}\subset \R^{qnil}$
(see \cite{KP}).\par

\indent Recall that $a\in\R$ is said to be \it generalized Drazin invertible, \rm if there exists $b\in\R$ such that
$$
b\in\hbox{\rm comm}^2(a), \hskip.3truecm ab^2=b,\hskip.3truecm a^2b-b\in\R^{qnil},
$$
where $\hbox{\rm comm}^2(a)=\{x\in\R\colon xy=yx \hbox{ for all } y\in \hbox{\rm comm}(a)\}$
(see \cite[Definition 4.1]{KP}). The set of all generalized Drazin invertible elements of $\R$ 
will be donted by $\R^{gD}$. Note that according to \cite[Theorem 4.2]{KP},
necessary and sufficient for $a\in \R^{gD}$ is that there exists $p\in \hbox{\rm comm}^2(a)\cap\R^\bullet$
such that $ap\in\R^{qnil}$ and $a+p\in\R^{-1}$. Moreover, since according to \cite[Proposition 2.3]{KP} this idempotent is unique,
it is called the \it spectral idempotent \rm of $a$ and it is denoted by $a^\pi$. Furthermore, according to \cite[Theorem 4.2]{KP},
$a\in\R^{gD}$ has at most one generalized Drazin inverse, which will be denoted by $a^D$. In addition, in this case,
$a^\pi=1-aa^D=1-a^Da$ (\cite[Page 142]{KP}. \par

\indent On the other hand, note that $\R^\sharp\subset\R^D\subset \R^{gD}$. Moreover, if $a\in\R^D$, then
$a^D=a^d$ and $a^\pi=1-aa^d$ (\cite[Proposition 4.9 and Remark 4.10]{KP}), In particular, if $a\in\R^\sharp$, then $p_a=a^\pi$. 
Since the group inverse has a key role in this article, the spectral idempotent 
of a group invertible elment $a\in\R$ is denoted by $p_a$.\par
\indent Recall that according to \cite[Theorem 8]{M2},
if $a\in \R^{gD}$, then $a$ is invertible along $d=1-a^\pi$ and $a^{\parallel 1-a^\pi}=a^D$.\par

\indent The last generalized inverse that will be recalled in this section is the Moore-Penrose inverse.
\par

\indent Let $\R$ be a unitary ring. An involution $\hbox{}^*\colon\R\to \R$ is an anti-isomorphism:
$$
(a+b)^*=a^*+b^*, \hskip.3truecm (ab)^*=b^*a^*,\hskip.3truecm  (a^*)^*=a,
$$
where $a$, $b\in \R$.\par
\indent An element $a\in\R$ is said to be \it Moore-Penrose invertible, \rm if there exists a (necessarily unique) $b\in \R$
such that 
$$
aba=a, \hskip.3truecm bab=b, \hskip.3truecm (ab)^*=ab, \hskip.3truecm (ba)^*=ba. 
$$
The Moore-Penrose inverse of $a$ is denoted by $a^\dag$ and the set of all Moore-Penrose
invertible elements of $\R$ will be denoted by $\R^\dag$  (see for example \cite{KP}).
Recall that according to \cite[Theorem 11]{M}, necessary and sufficient for $a\in\R^\dag$ is that
$a$ is invertible along $a^*$. Moreover, in this case $a^{\parallel a^*}=a^\dag$.\par

\indent Finally, recall that $a\in\R^\dag$ is said to be EP, if $aa^\dag=a^\dag a$ (see \cite{KP}).
Let $\R^{EP}$ be the set of all EP elements in $\R$.
 
\section{Equivalent conditions for the inverse along an element}

\noindent In this section new conditions equivalent to the ones in Definition \ref{def2} will be given.\par

\indent First of all note that if $\R$ is a ring and $b$, $d\in \R$ are such that $b\R=d\R$, then $b_{-1}(0)=d_{-1}(0)$; similarly from $\R b=\R d$ it can be derived that 
$b^{-1}(0)=d^{-1}(0)$. These conditions will be used to prove the invertibility along an element. Next follows a preliminary result. \par

\begin{pro}\label{pro3}Let $\R$ be a unitary ring and consider $b$, $d\in\R$ .\par
\noindent \rm (a) \it Let $a\in\R$ be such that $b$ is an outer inverse of $a$. Then the following statements hold.\par
\noindent \rm (i) \it If $b^{-1}(0)\subseteq d^{-1}(0)$, then $d=dab$; in particular $\R d\subseteq \R b$.\par
\noindent \rm  (ii) \it If $b_{-1}(0)\subseteq d_{-1}(0)$, then $d=bad$; in particular $d\R\subseteq b\R$.\par
\noindent \rm (b) \it  Suppose that $d\in\hat{\R}$. Then the following statements hold.\par
\noindent \rm  (iii) \it If $d^{-1}(0)\subseteq b^{-1}(0)$, then $\R b \subseteq \R d$.\par
\noindent \rm  (iv) \it If $d_{-1}(0)\subseteq b_{-1}(0)$, then $b\R\subseteq d\R$.
\end{pro}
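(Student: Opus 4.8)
The plan is to exploit the two defining identities that the hypotheses make available — namely $b=bab$ for the outer inverse and $d=d\overline{d}d$ for a generalized inverse $\overline{d}$ of the regular element $d$ — each of which produces a canonical element lying in one of the annihilators $b^{-1}(0)$, $b_{-1}(0)$, $d^{-1}(0)$ or $d_{-1}(0)$. The containment hypothesis in each case then forces that distinguished element into the corresponding annihilator of the other object, which is precisely the relation we are after. The whole argument is therefore a matter of reading off the right annihilator element and transporting it across the inclusion.

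For part (a) I would start from $b=bab$, rewritten as $b(1-ab)=0$ and $(1-ba)b=0$. The first says $1-ab\in b^{-1}(0)$, the second $1-ba\in b_{-1}(0)$. In case (i) the hypothesis $b^{-1}(0)\subseteq d^{-1}(0)$ gives $1-ab\in d^{-1}(0)$, that is $d(1-ab)=0$, whence $d=dab$. In case (ii) the hypothesis $b_{-1}(0)\subseteq d_{-1}(0)$ gives $(1-ba)d=0$, whence $d=bad$.

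For part (b) I would use regularity: fix $\overline{d}\in\R$ with $d=d\overline{d}d$, so that $d(1-\overline{d}d)=0$ and $(1-d\overline{d})d=0$, i.e.\ $1-\overline{d}d\in d^{-1}(0)$ and $1-d\overline{d}\in d_{-1}(0)$. In case (iii) the hypothesis $d^{-1}(0)\subseteq b^{-1}(0)$ yields $b(1-\overline{d}d)=0$, hence $b=b\overline{d}d$; in case (iv) the hypothesis $d_{-1}(0)\subseteq b_{-1}(0)$ yields $(1-d\overline{d})b=0$, hence $b=d\overline{d}b$.

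Finally, each of the four ``in particular'' inclusions follows by left- or right-multiplying the derived identity by an arbitrary $r\in\R$: from $d=dab$ one gets $rd=(rda)b\in\R b$, so $\R d\subseteq\R b$; from $d=bad$ one gets $dr=b(adr)\in b\R$, so $d\R\subseteq b\R$; and symmetrically $b=b\overline{d}d$ gives $\R b\subseteq\R d$ while $b=d\overline{d}b$ gives $b\R\subseteq d\R$. There is no genuine obstacle here, since every step is a single-line manipulation; the only point demanding care is matching each annihilator containment to the correct \emph{one-sided} identity ($1-ab$ versus $1-ba$, and $1-\overline{d}d$ versus $1-d\overline{d}$), so that the multiplications fall on the appropriate side and the intended inclusion, rather than its opposite, is produced.
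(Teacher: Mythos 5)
Your proposal is correct and follows essentially the same route as the paper: extract the annihilator elements $1-ab$, $1-ba$ from $b=bab$ and $1-\overline{d}d$, $1-d\overline{d}$ from $d=d\overline{d}d$, then push them across the hypothesized inclusions. The only cosmetic difference is that for (iii)--(iv) the paper observes that $d$ is an outer inverse of $\overline{d}$ and reuses (i)--(ii), whereas you redo the one-line computation directly.
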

\begin{proof}(i). Since $b(1-ab)=0$, $1-ab \in b^{-1}(0) \subseteq d^{-1}(0)$.  Thus, $d(1-ab)=0$.\par
\noindent (ii). Apply a similar argument to the one used in the proof of statement (i). \par
\noindent (iii). Let $\overline{d}\in\R$ be such that $d\overline{d}d=d$. Then $d$ is an outer inverse
of $\overline{d}$. Therefore, according to statement (i), $\R b\subseteq \R d$.\par
\noindent (iv). Apply a similar argument  to the one used in the proof of statement (iii),
using statement (ii) instead of statement (i).
\end{proof}

\indent In the following theorems, equivalent conditions to the ones in Definition \ref{def2} will be proved.
These conditions will be presented in two different theorems to show when it is necessary to assume 
the regularity of the element $d\in \R$. \par

\begin{thm}\label{thm4}Let $\R$ be a unitary ring and consider $a$, $b$, $d\in\R$ be such that $b$ is an outer inverse of $a$.
Then, the following statements are equivalent.\par
\noindent \rm (i) \it $b$ is the inverse of $a$ algong $d$.\par
\noindent\rm (ii) \it $\R d=\R b$, $b\R\subseteq d\R$ and $b_{-1} (0)\subseteq d_{-1}(0)$.\par
\noindent\rm (iii) \it $b\R=d R$, $\R b\subseteq \R d$ and $b^{-1}(0)\subseteq d^{-1}(0)$.\par
\noindent\rm (iv) \it $\R b\subseteq \R d$, $b\R\subseteq d\R$,   $b_{-1} (0)\subseteq d_{-1}(0)$
and $b^{-1}(0)\subseteq d^{-1}(0)$.
\end{thm}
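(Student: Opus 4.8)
Since $b$ is assumed throughout to be an outer inverse of $a$, condition (i) amounts, by Definition \ref{def2}, to the two ideal equalities $b\R=d\R$ and $\R b=\R d$. The plan is to prove the equivalence as a star centered at (i): first that (i) forces each of (ii), (iii), (iv), and then that each of (ii), (iii), (iv) returns to (i). The only external inputs needed are Proposition \ref{pro3}(i),(ii) together with the two elementary observations recorded just before that proposition, namely that $b\R=d\R$ implies $b_{-1}(0)=d_{-1}(0)$ and that $\R b=\R d$ implies $b^{-1}(0)=d^{-1}(0)$.

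The implications out of (i) are immediate bookkeeping. Assuming $b\R=d\R$ and $\R b=\R d$, the inclusions $b\R\subseteq d\R$ and $\R b\subseteq \R d$ hold trivially, while the annihilator observations give $b_{-1}(0)=d_{-1}(0)$ and $b^{-1}(0)=d^{-1}(0)$, in particular the inclusions $b_{-1}(0)\subseteq d_{-1}(0)$ and $b^{-1}(0)\subseteq d^{-1}(0)$. Selecting the relevant pieces yields (ii), (iii) and (iv) at once; no computation beyond the definition is involved here.

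The substantive direction is recovering the missing ideal inclusion in each case, and this is where Proposition \ref{pro3} does the work. For (ii)$\Rightarrow$(i): from $b_{-1}(0)\subseteq d_{-1}(0)$, Proposition \ref{pro3}(ii) gives $d=bad$ and hence $d\R\subseteq b\R$; combined with the hypothesis $b\R\subseteq d\R$ this yields $b\R=d\R$, and $\R b=\R d$ is already assumed. For (iii)$\Rightarrow$(i): from $b^{-1}(0)\subseteq d^{-1}(0)$, Proposition \ref{pro3}(i) gives $d=dab$ and hence $\R d\subseteq \R b$; with the hypothesis $\R b\subseteq \R d$ this gives $\R b=\R d$, while $b\R=d\R$ is assumed. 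Finally (iv)$\Rightarrow$(i) uses both parts simultaneously: Proposition \ref{pro3}(ii) upgrades $b\R\subseteq d\R$ to $b\R=d\R$, and Proposition \ref{pro3}(i) upgrades $\R b\subseteq \R d$ to $\R b=\R d$.

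The point to watch—rather than a genuine obstacle—is that all four implications into (i) must be obtained using only parts (i) and (ii) of Proposition \ref{pro3}, which do \emph{not} require the regularity of $d$, and never the regularity-dependent parts (iii), (iv). Verifying that the reverse ideal inclusions can be extracted purely from the annihilator hypotheses via $d=bad$ and $d=dab$ is precisely what makes this a self-contained statement, separate from the companion theorem in which the hypotheses are weakened at the cost of assuming $d\in\hat{\R}$.
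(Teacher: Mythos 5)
Your proposal is correct and follows essentially the same route as the paper: (i) trivially implies (ii)--(iv) via the annihilator observations preceding Proposition \ref{pro3}, and the converses are recovered exactly as the paper does, by using Proposition \ref{pro3}(i) and (ii) to upgrade the annihilator inclusions $b^{-1}(0)\subseteq d^{-1}(0)$ and $b_{-1}(0)\subseteq d_{-1}(0)$ to the missing ideal inclusions $\R d\subseteq \R b$ and $d\R\subseteq b\R$. Your closing remark that only the regularity-free parts of Proposition \ref{pro3} are needed correctly identifies the point that separates this theorem from Theorem \ref{thm5}.
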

\begin{proof} It is clear that statement (i) implies all the other statements.\par
\indent On the other hand, to prove that statements (ii)-(iv) imply that $b$ is the inverse of $a$
along $d$, note that according to Proposition \ref{pro3} (i) (respectively  Proposition \ref{pro3} (ii)), if $b^{-1}(0)\subseteq d^{-1}(0)$ 
(respectively if $b_{-1}(0)\subseteq d_{-1}(0)$), then $\R d\subseteq \R b$
(respectively $d\R\subseteq b\R$).
\end{proof}

\begin{thm}\label{thm5}Let $\R$ be a unitary ring and consider $a$, $b$, $d\in\R$ be such that $b$ is an outer inverse of $a$
and $d\in\hat{\R}$.
Then, the following statements are equivalent.\par
\noindent \rm (i) \it $b$ is the inverse of $a$ along $d$.\par
\noindent\rm (ii) \it $\R d=\R b$, $d\R\subseteq b\R$ and $d_{-1} (0)\subseteq b_{-1}(0)$.\par
\noindent\rm (iii) \it $b\R=d R$, $\R d\subseteq \R b$ and $d^{-1}(0)\subseteq b^{-1}(0)$.\par
\noindent\rm (iv) \it $\R b= \R d$,   $b_{-1} (0)= d_{-1}(0)$.\par
\noindent \rm (v) \it $\R b\subseteq \R d$,  $d\R\subseteq b\R$, $b^{-1}(0)\subseteq d^{-1}(0)$
and $d_{-1} (0)\subseteq b_{-1}(0)$.\par
\noindent \rm (vi) \it $\R d\subseteq \R b$, $b\R\subseteq d\R$, $b_{-1} (0)\subseteq d_{-1}(0)$
and $d^{-1}(0)\subseteq b^{-1}(0)$.\par
\noindent \rm (vii) \it $\R d\subseteq\R b$, $d\R\subseteq b\R$, $d^{-1}(0)\subseteq b^{-1}(0)$ and
$d_{-1} (0)\subseteq b_{-1}(0)$.\par
\noindent \rm (viii)  \it $b\R=d R$, $b^{-1}(0)= d^{-1}(0)$.\par
\noindent \rm (ix) \it $\R b\subseteq \R d$,  $b^{-1}(0)\subseteq d^{-1}(0)$ and $b_{-1} (0)= d_{-1}(0)$.\par
\noindent \rm (x) \it  $\R d\subseteq\R b$, $d^{-1}(0)\subseteq b^{-1}(0)$ and $b_{-1} (0)= d_{-1}(0)$.\par
\noindent \rm (xi) \it $d\R\subseteq b\R$, $d_{-1} (0)\subseteq b_{-1}(0)$ and $b^{-1} (0)= d^{-1}(0)$.\par
\noindent \rm (xii) \it $b\R\subseteq d\R$, $b_{-1} (0)\subseteq d_{-1}(0)$ and $b^{-1} (0)= d^{-1}(0)$.\par
\noindent \rm (xiii) \it  $b^{-1} (0)= d^{-1}(0)$ and $b_{-1} (0)= d_{-1}(0)$.\par
\end{thm}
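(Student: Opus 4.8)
The plan is to collapse the whole list back onto the two ideal equalities of Definition \ref{def2}. Since $b$ is assumed to be an outer inverse of $a$, statement (i) is by definition nothing more than $\R b=\R d$ together with $b\R=d\R$, that is, the conjunction of the four basic inclusions $\R b\subseteq\R d$, $\R d\subseteq\R b$, $b\R\subseteq d\R$ and $d\R\subseteq b\R$. So everything reduces to understanding how each of these four inclusions can be traded for a one-sided annihilator inclusion. First I would assemble two families of implications. On one side, the inclusion version of the two remarks made just before Proposition \ref{pro3} is immediate: writing the generator of the smaller ideal as a multiple of the other gives at once $\R b\subseteq\R d\Rightarrow d^{-1}(0)\subseteq b^{-1}(0)$, $\R d\subseteq\R b\Rightarrow b^{-1}(0)\subseteq d^{-1}(0)$, $b\R\subseteq d\R\Rightarrow d_{-1}(0)\subseteq b_{-1}(0)$ and $d\R\subseteq b\R\Rightarrow b_{-1}(0)\subseteq d_{-1}(0)$. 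On the other side, Proposition \ref{pro3} supplies exactly the converses under the standing hypotheses: parts (i),(ii) (using that $b$ is an outer inverse of $a$) give $b^{-1}(0)\subseteq d^{-1}(0)\Rightarrow\R d\subseteq\R b$ and $b_{-1}(0)\subseteq d_{-1}(0)\Rightarrow d\R\subseteq b\R$, while parts (iii),(iv) (using $d\in\hat\R$) give $d^{-1}(0)\subseteq b^{-1}(0)\Rightarrow\R b\subseteq\R d$ and $d_{-1}(0)\subseteq b_{-1}(0)\Rightarrow b\R\subseteq d\R$.

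Combining the two families produces four biconditionals pairing each basic inclusion with a definite annihilator inclusion, namely $\R b\subseteq\R d\Leftrightarrow d^{-1}(0)\subseteq b^{-1}(0)$, $\R d\subseteq\R b\Leftrightarrow b^{-1}(0)\subseteq d^{-1}(0)$, $b\R\subseteq d\R\Leftrightarrow d_{-1}(0)\subseteq b_{-1}(0)$ and $d\R\subseteq b\R\Leftrightarrow b_{-1}(0)\subseteq d_{-1}(0)$; in particular $\R b=\R d\Leftrightarrow b^{-1}(0)=d^{-1}(0)$ and $b\R=d\R\Leftrightarrow b_{-1}(0)=d_{-1}(0)$. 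With these in hand the implication from (i) to each of (ii)--(xiii) is immediate, since (i) forces $\R b=\R d$ and $b\R=d\R$, hence all four annihilators coincide, so every inclusion and every equality appearing in (ii)--(xiii) automatically holds.

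For the reverse implications I would argue uniformly rather than case by case. To recover (i) it suffices to produce the four inclusions $\R b\subseteq\R d$, $\R d\subseteq\R b$, $b\R\subseteq d\R$, $d\R\subseteq b\R$, and each biconditional shows that the relevant inclusion is available as soon as the hypothesis contains \emph{either} that inclusion itself \emph{or} its paired annihilator inclusion (an equality of annihilators or of ideals of course supplies both members of its pair). Thus the one thing to check is that each of (ii)--(xiii) offers, for every one of the four inclusions, at least one of its two admissible forms; whenever the annihilator form is the one present, the corresponding part of Proposition \ref{pro3} converts it into the desired ideal inclusion. The main obstacle is therefore purely organizational: a short inspection confirms that each listed condition indeed meets all four pairs, after which each condition yields all four inclusions and hence (i). There is no further mathematical difficulty, the entire content residing in Proposition \ref{pro3} together with the elementary annihilator computations.
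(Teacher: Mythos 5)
Your proposal is correct and follows essentially the same route as the paper: the forward direction comes from the elementary fact that the ideal equalities in Definition \ref{def2} force the corresponding annihilator equalities, and every reverse implication is obtained by using Proposition \ref{pro3} (parts (i)--(ii) via the outer-inverse hypothesis, parts (iii)--(iv) via the regularity of $d$) to convert whichever annihilator inclusions appear into the missing ideal inclusions. Your explicit packaging into four inclusion/annihilator biconditionals is just a slightly more systematic presentation of the same case check the paper leaves to the reader.
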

\begin{proof} As in the proof of Theorem \ref{thm4}, statement (i) implies all the other statements.\par
\indent To prove that statements (ii)-(xiii) imply that $b$ is the inverse of $a$ along $d$,
note the following facts. According to Proposition \ref{pro3} (i) (respectively  Proposition \ref{pro3} (ii)), if $b^{-1}(0)\subseteq d^{-1}(0)$ 
(respectively if $b_{-1}(0)\subseteq d_{-1}(0)$), then $\R d\subseteq \R b$
(respectively $d\R\subseteq b\R$). On the other hand, 
according to Proposition \ref{pro3} (iii) (respectively  Proposition \ref{pro3} (iv)), if $d^{-1}(0)\subseteq b^{-1}(0)$ 
(respectively if  $d_{-1}(0)\subseteq b_{-1}(0)$), then $\R b \subseteq \R d$
(respectively $b\R\subseteq d\R$).
\end{proof}

\section{ Further invertible elements }

\noindent In this section, using ideas similar to the ones in \cite[Theorem 7]{M}, more equivalent conditions to the existence of an inverse along an element will be given. What is more, thanks to these characterizations, given a unitary ring $\R$, $a\in\R$ and $d\in\hat{\R}$ such that $a$ is invertible along $d$, more elements invertible along $d$ will be constructed.\par

\indent First of all, note that  if $d\in\hat{\R}$ and $\overline{d}$ is a generalized inverse of $d$, then 
$d\overline{d}\R d\overline{d}$ (respectively $\overline{d}d \R \overline{d}d$) is a subring of $\R$
with identity $d\overline{d}$ (respectively $\overline{d}d$). \par

\begin{thm}\label{thm8}Let $\R$ be a unitary ring and consider $a\in\R$ and $d\in\hat{\R}$.
Then, if $\overline{d}$ is a generalized inverse of $d$, the following conditions are equivalent.\par
\noindent \rm (i) \it The element $a$ is invertible along $d$.\par
\noindent \rm (ii) \it $dad\overline{d}\in (d\overline{d}\R d\overline{d})^{-1}$.\par
\noindent \rm (iii) \it $\overline{d}dad\in (\overline{d}d\R \overline{d}d)^{-1}$.\par
\noindent \rm (iv) \it  $dad\overline{d}$ is group invertible and $p_{dad\overline{d}}= 1-d\overline{d}$.\par
\noindent \rm (v) \it  $\overline{d}dad$ is group invertible and $p_{\overline{d}dad}= 1-\overline{d}d$.\par
\noindent In addition, if $x\in d\overline{d}\R d\overline{d}$  (respectively $y \in \overline{d}d\R \overline{d}d$)
 is an inverse of $dad\overline{d}$ (respectively  of $\overline{d}dad$), then $xd=a^{\parallel d}$
(respectively $dy= a^{\parallel d}$).
\end{thm}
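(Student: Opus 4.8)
The plan is to set $e=d\overline{d}$ and $f=\overline{d}d$, which are idempotents satisfying $ed=d=df$, and to record first that $u:=dad\overline{d}$ lies in $e\R e$ while $v:=\overline{d}dad$ lies in $f\R f$; indeed $eu=u=ue$ and $fv=v=vf$ follow from $d\overline{d}d=d$ by elementary cancellation. Granting this, the equivalences (ii)$\Leftrightarrow$(iv) and (iii)$\Leftrightarrow$(v) become immediate from the characterization of group invertibility in Remark \ref{rema13}(ii): applied with the idempotent $1-e$ (respectively $1-f$), it says that an element of the corner $e\R e$ (respectively $f\R f$) is group invertible with spectral idempotent $1-e$ (respectively $1-f$) exactly when it is invertible in that corner. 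Since $u$ and $v$ already sit in the respective corners, (iv) says precisely that $u\in (e\R e)^{-1}$, i.e. (ii), and likewise (v) says precisely (iii).

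The heart of the argument is therefore (i)$\Leftrightarrow$(ii), with (i)$\Leftrightarrow$(iii) following by a symmetric computation. For (i)$\Rightarrow$(ii), write $b=a^{\parallel d}$. First I would extract the two key identities $dab=d$ and $bad=d$: from $\R b=\R d$ one has $d=rb$ for some $r$, whence $dab=r(bab)=rb=d$; from $b\R=d\R$ one has $d=bq$ for some $q$, whence $bad=(bab)q=bq=d$. The same ideal equalities give the one-sided absorptions $d\overline{d}b=b$ (as $b\in d\R$) and $b\overline{d}d=b$ (as $b\in\R d$). With these in hand, the candidate inverse is $x:=b\overline{d}$: the two absorptions show $ex=x=xe$, so $x\in e\R e$, and then $ux=(dab)\overline{d}=d\overline{d}=e$ together with $xu=(bad)\overline{d}=d\overline{d}=e$ show $x$ inverts $u$ in the corner.

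For (ii)$\Rightarrow$(i), let $x\in e\R e$ invert $u$ and set $b:=xd$. Using $x=d\overline{d}x$, the computation $dax=(dad\overline{d})x=ux=e=d\overline{d}$ is the workhorse: it gives $bab=x(dax)d=xd=b$, so $b$ is an outer inverse of $a$, and it gives $dab=(dax)d=d$. Combined with $bad=x(ud)=(xu)d=ed=d$ (using $ud=dad$), these yield $d\in\R b$ and $d\in b\R$, while $b=xd\in\R d$ and $b=d(\overline{d}xd)\in d\R$ give the reverse inclusions; hence $\R b=\R d$ and $b\R=d\R$, so $b=a^{\parallel d}$. In particular $a^{\parallel d}=xd$, the asserted formula. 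The statement involving $v$ and $y$ is then established by the mirror-image argument, taking $y=\overline{d}b$ in one direction and $b=dy$ in the other, and delivers $a^{\parallel d}=dy$.

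I expect the only delicate point to be the bookkeeping of the idempotent absorptions, since one is given merely $d\overline{d}d=d$ and may \emph{not} assume that $\overline{d}$ is reflexive (i.e. $\overline{d}d\overline{d}=\overline{d}$); every cancellation must be routed through $d\overline{d}d=d$ together with the ideal equalities defining $a^{\parallel d}$. The genuine content is thus concentrated in the identities $dab=d$ and $bad=d$, from which both implications and the closing formula flow with only routine manipulation.
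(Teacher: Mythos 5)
Your proof is correct, and its overall architecture (reducing (iv) and (v) to (ii) and (iii) via Remark \ref{rema13}(ii), proving (i)$\Leftrightarrow$(ii) directly, and obtaining (iii), (v) by symmetry) coincides with the paper's; your (ii)$\Rightarrow$(i) half, with $b=xd$ and the computations $dax=d\overline{d}$, $xdad=d$, $dad\overline{d}xd=d$, is essentially verbatim the published argument. Where you genuinely diverge is in (i)$\Rightarrow$(ii): the paper imports from \cite[Theorem 7]{M} the facts that $d\R=da\R$, $\R d=\R ad$ and that $da$ and $ad$ are group invertible, and then assembles a left and a right inverse of $dad\overline{d}$ through the elements $z= da (da)^\sharp ud\overline{d}$ and $w=d\overline{d}v(ad)^\sharp adad\overline{d}$. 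You instead work straight from Definition \ref{def2}: the identities $dab=d$ and $bad=d$ (extracted from $\R b=\R d$, $b\R=d\R$ and $b=bab$) together with the absorptions $d\overline{d}b=b=b\overline{d}d$ let you exhibit the single explicit element $x=b\overline{d}=a^{\parallel d}\overline{d}$ as a two-sided inverse of $dad\overline{d}$ in the corner ring $d\overline{d}\R d\overline{d}$. This is more self-contained (no appeal to the external theorem or to the group invertibility of $da$ and $ad$) and has the added benefit of producing at once the formula $x=a^{\parallel d}\overline{d}$, which the paper only records afterwards in Remark \ref{rema4000}. Your care in routing every cancellation through $d\overline{d}d=d$ rather than assuming $\overline{d}$ reflexive is exactly the right precaution, and all the bookkeeping checks out.
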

\begin{proof}Suppose that $a^{\parallel d}$ exists. To prove statement (ii), first note that $dad\overline{d}=d\overline{d} dad\overline{d}
\in d\overline{d}\R d\overline{d}$. Moreover, according to \cite[Theorem 7]{M}, $d\overline{d}\R=d\R= da\R$. In particular,
there is $u\in \R$ such that $d\overline{d}=dau$. Furthermore, according to  \cite[Theorem 7]{M},
$da$ is group invertible. As a result,
$$
d\overline{d}=dau=da (da (da)^\sharp ud\overline{d}).
$$
\noindent Thus, $z= da (da)^\sharp ud\overline{d}=d\overline{d}da (da)^\sharp ud\overline{d}\in d\overline{d}\R d\overline{d}$ and
$d\overline{d}=daz=dad\overline{d}z$.\par

\indent On the other hand, according again to  \cite[Theorem 7]{M}, $\R d=\R ad$ and 
$ad$ is group invertible. In particular, there is $v\in \R$ such that $d=vad$. Thus,
$$
d\overline{d}=vad\overline{d}= (d\overline{d}v(ad)^\sharp ada)dad\overline{d}= (d\overline{d}v(ad)^\sharp adad\overline{d})dad\overline{d}.
$$
Then, $w=d\overline{d}v(ad)^\sharp adad\overline{d}\in d\overline{d}\R d\overline{d}$
and $d\overline{d}=wdad\overline{d}$.  Consequently, $dad\overline{d}\in  (d\overline{d}\R d\overline{d})^{-1}$.\par
\indent Suppose that stetement (ii) holds. Let $x\in d\overline{d}\R d\overline{d}$ such that 
$$
dad\overline{d} x=d\overline{d}=xdad\overline{d}.
$$
Then, it will be proved that $xd=a^{\parallel d}$. First of all, note that
$$
xd a xd=xdad\overline{d}xd=d\overline{d}xd=xd.
$$
\noindent In addition, clearly $\R xd\subseteq \R d$ and since $x=d\overline{d}x$, $xd\R\subseteq d\R$. 
Moreover, since $xdad\overline{d}=d\overline{d}$, $xdad=d$, which implies that $d\R\subseteq xd\R$.
Similarly, since  $dad\overline{d} x=d\overline{d}$, $dad\overline{d} xd=d$, which implies that $\R d\subseteq \R xd$.\par

\indent To prove the equivalence between statements (ii) and (iv), apply Remark \ref{rema13} (ii).

\indent The equivalences among the statements (i), (iii) and (v) can be proved using similar arguments.
\end{proof}

\begin{rema}\label{rema4000}\rm Under the same hypotheses in Theorem \ref{thm8} and using the same notation in this Theorem, note that
since $xd=a^{\parallel d}$, $(dad\overline{d})^{-1}_{d\overline{d}\R d\overline{d}}=(dad\overline{d})^\sharp=x=a^{\parallel d}\overline{d}$.
Similarly, since $dy=a^{\parallel d}$, $(\overline{d}dad)^{-1}_{\overline{d}d\R \overline{d}d}=(\overline{d}dad)^\sharp=y= \overline{d}a^{\parallel d}$.\par
\end{rema}

\indent Given a unitary ring $\R$, $a\in\R$ and $d\in\hat{\R}$ such that $a$ is invertible along $d$, applying Theorem \ref{thm8} new elements 
invertible along $d$ can be created.\par

\begin{cor}\label{cor9}Let $\R$ be a unitary ring and consider $a\in\R$ and $d\in\hat{\R}$. 
Then, if $\overline{d}$ is a generalized inverse of $d$, the following statements are equivalent.\par
\noindent \rm (i) \it $a$ is invertible along $d$.\par
\noindent \rm (ii) \it $ad\overline{d}$ is invertible along $d$.\par
\noindent \rm (iii) \it $\overline{d}da$ is invertible along $d$.\par
\noindent Furthermore, in this case,
$$
a^{\parallel d}=(ad\overline{d})^{\parallel d}=(\overline{d}da)^{\parallel d}.
$$
\end{cor}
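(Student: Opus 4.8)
The plan is to derive everything directly from Theorem \ref{thm8}, exploiting the fact that the auxiliary corner rings $d\overline{d}\R d\overline{d}$ and $\overline{d}d\R\overline{d}d$ depend only on the pair $(d,\overline{d})$ and not on the first argument. First I would apply Theorem \ref{thm8} to $a$, recording that $a$ is invertible along $d$ precisely when $dad\overline{d}\in(d\overline{d}\R d\overline{d})^{-1}$, and equivalently precisely when $\overline{d}dad\in(\overline{d}d\R\overline{d}d)^{-1}$. Since $ad\overline{d}$ and $\overline{d}da$ are again elements of $\R$ and $d\in\hat{\R}$ is unchanged, Theorem \ref{thm8} applies verbatim to each of them with the same generalized inverse $\overline{d}$.

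The heart of the argument is a short idempotency computation. Applying the criterion of Theorem \ref{thm8} (ii) to the element $ad\overline{d}$ in place of $a$, the element to be inverted in $d\overline{d}\R d\overline{d}$ is
$$
d(ad\overline{d})d\overline{d}=da(d\overline{d})^2=dad\overline{d},
$$
where I used that $d\overline{d}$ is an idempotent. Thus the condition of Theorem \ref{thm8} (ii) for $ad\overline{d}$ is literally the same statement as the one for $a$, which yields the equivalence (i) $\Leftrightarrow$ (ii). Moreover, if $x\in d\overline{d}\R d\overline{d}$ denotes the inverse of $dad\overline{d}$ in that corner ring, then the final assertion of Theorem \ref{thm8} gives simultaneously $a^{\parallel d}=xd$ and $(ad\overline{d})^{\parallel d}=xd$, whence $a^{\parallel d}=(ad\overline{d})^{\parallel d}$.

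The equivalence (i) $\Leftrightarrow$ (iii) is entirely symmetric, using the criterion of Theorem \ref{thm8} (iii) together with the idempotent $\overline{d}d$. Applying it to $\overline{d}da$ in place of $a$, the relevant element of $\overline{d}d\R\overline{d}d$ is
$$
\overline{d}d(\overline{d}da)d=(\overline{d}d)^2ad=\overline{d}dad,
$$
again the same element as for $a$. Hence $\overline{d}da$ is invertible along $d$ exactly when $a$ is, and if $y\in\overline{d}d\R\overline{d}d$ is the inverse of $\overline{d}dad$ there, then $a^{\parallel d}=dy=(\overline{d}da)^{\parallel d}$. I do not anticipate any genuine obstacle: once Theorem \ref{thm8} is in hand, the only thing left to verify is that these two reduced products coincide, and this is immediate from the idempotency of $d\overline{d}$ and $\overline{d}d$. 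All the delicate work has already been absorbed into Theorem \ref{thm8}, so the main step here is simply recognizing that the reductions are argument-independent on each side.
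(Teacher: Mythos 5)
Your proposal is correct and follows essentially the same route as the paper: both reduce the claim to Theorem \ref{thm8} by observing that $d(ad\overline{d})d\overline{d}=dad\overline{d}$ (via idempotency of $d\overline{d}$) and $\overline{d}d(\overline{d}da)d=\overline{d}dad$, so the corner-ring invertibility criteria and the resulting formulas $xd$ and $dy$ coincide for $a$ and for the modified elements. No gaps.
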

\begin{proof}Note that $dad\overline{d}d\overline{d}=dad\overline{d}$.
Therefore, according to Theorem \ref{thm8}, statements (i) and (ii) are equivalent.\par
\indent A similar argument proves the equivalence between statements (i) and (iii).\par
\indent According to Theorem \ref{thm8}, if $x\in d\overline{d}\R d\overline{d}$ is an inverse of $dad\overline{d}$,
then $a^{\parallel d}=xd$. In particular, since $dad\overline{d}d\overline{d}=dad\overline{d}$,
$a^{\parallel d}=(ad\overline{d})^{\parallel d}$. A similar argument,
using the inverse of $\overline{d}dad\in \overline{d}d\R \overline{d}d$, proves that 
$a^{\parallel d}=(\overline{d}da)^{\parallel d}$.
\end{proof}

\begin{cor}\label{cor10}Let $\R$ be a unitary ring and consider $a\in\R$ and $d\in\hat{\R}$. 
Then, if $\overline{d}$ is a generalized inverse of $d$, $a^{\parallel d}$ exists and $x$, $y\in\R$, the following statements hold.\par
\noindent \rm (i) \it $a+x(1-d\overline{d})$ is invertible along $d$.\par
\noindent \rm (ii) \it  $a+(1-\overline{d}d)y$ is invertible along $d$.\par
\noindent \rm (iii) \it \noindent  $a+x(1-d\overline{d})+(1-\overline{d}d)y$ is invertible along $d$.\par
\noindent Moreover, $a^{\parallel d}=(a+x(1-d\overline{d}))^{\parallel d}=(a+(1-\overline{d}d)y)^{\parallel d}
= (a+x(1-d\overline{d})+(1-\overline{d}d)y)^{\parallel d}$.
\end{cor}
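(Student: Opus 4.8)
The plan is to reduce all three assertions to a single computation and then quote Theorem~\ref{thm8}. The observation that drives everything is that both perturbation terms $x(1-d\overline{d})$ and $(1-\overline{d}d)y$ disappear after one applies the map $c\mapsto dcd\overline{d}$, which is precisely the map Theorem~\ref{thm8} uses to detect invertibility along $d$.

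Concretely, write $c=a+x(1-d\overline{d})+(1-\overline{d}d)y$ for the most general element, namely the one in (iii); items (i) and (ii) are then the special cases $y=0$ and $x=0$. First I would record the two identities
\[
(1-d\overline{d})d\overline{d}=d\overline{d}-d\overline{d}d\overline{d}=0,\qquad d(1-\overline{d}d)=d-d\overline{d}d=0,
\]
the first because $d\overline{d}$ is idempotent and the second because $d\overline{d}d=d$. Using these,
\[
dcd\overline{d}=dad\overline{d}+dx(1-d\overline{d})d\overline{d}+d(1-\overline{d}d)yd\overline{d}=dad\overline{d},
\]
since the second summand contains the factor $(1-d\overline{d})d\overline{d}=0$ and the third contains the factor $d(1-\overline{d}d)=0$.

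Now, since $a^{\parallel d}$ exists, the equivalence (i)$\Leftrightarrow$(ii) in Theorem~\ref{thm8} gives $dad\overline{d}\in(d\overline{d}\R d\overline{d})^{-1}$. By the identity just established, $dcd\overline{d}=dad\overline{d}$, so statement (ii) of Theorem~\ref{thm8} holds with $a$ replaced by $c$; hence $c$ is invertible along $d$, which proves (iii), and (i) and (ii) as its special cases. For the value of the inverse, let $x_0\in d\overline{d}\R d\overline{d}$ be the inverse of $dad\overline{d}=dcd\overline{d}$ in the corner ring; the concluding clause of Theorem~\ref{thm8} yields both $a^{\parallel d}=x_0 d$ and $c^{\parallel d}=x_0 d$, whence $c^{\parallel d}=a^{\parallel d}$. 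Specializing $c$ then gives the three displayed equalities at once.

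I do not expect a genuine obstacle here: the entire content is the vanishing of the two perturbation terms under $c\mapsto dcd\overline{d}$, after which Theorem~\ref{thm8} does all the work. The only point requiring a moment's care is to treat (i) and (ii) not separately but as instances of the single identity for (iii), so that one computation covers all three cases; one must simply remember that $(1-d\overline{d})$ is annihilated on the right by $d\overline{d}$, while $(1-\overline{d}d)$ is annihilated on the left by $d$.
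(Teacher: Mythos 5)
Your proof is correct and rests on essentially the same mechanism as the paper's: both exploit that the perturbation terms are annihilated (since $(1-d\overline{d})d\overline{d}=0$ and $d(1-\overline{d}d)=0$) and then invoke the corner-ring criterion of Theorem~\ref{thm8}. The only cosmetic difference is that the paper handles (i) and (ii) separately via Corollary~\ref{cor9} (itself a consequence of Theorem~\ref{thm8}) and deduces (iii) by combining them, whereas you verify $dcd\overline{d}=dad\overline{d}$ once for the general perturbation and obtain all three statements, together with the equality of the inverses, in a single stroke.
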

\begin{proof} Since $ad\overline{d}=(a+x(1-d\overline{d}))d\overline{d}$, according to  Corollary \ref{cor9}, 
$(a+x(1-d\overline{d}))^{\parallel d}$ exists and  $a^{\parallel d}=(a+x(1-d\overline{d}))^{\parallel d}$.
A similar argument proves that $a+(1-\overline{d}d)y$ is invertible along $d$ and $a^{\parallel d}=(a+(1-\overline{d}d)y)^{\parallel d}$ .
Statement (iii) and the remaining identity can be derived applying statements (i) and (ii).
\end{proof}

\indent To end this section, the case $d\in\R^\bullet$ will be considered.\par

\begin{cor}\label{cor1000}Let $\R$ be a unitary ring and consider $a\in\R$ and $p\in\R^\bullet$. 
Then, the following statements are equivalent.\par
\noindent \rm (i) \it  $a$ is invertible along $p$.\par
\noindent \rm (ii) \it $ap$ is invertible along $p$.\par
\noindent \rm (iii) \it  $pa$ is invertible along $p$.\par
\noindent \rm (iv) \it  $pap\in (p\R p)^{-1}$.\par
\noindent \rm (v) \it  $pap$ is group invertible and $p_{pap}=1-p$.\par
\noindent Furthermore, in this case,
$$
a^{\parallel p}=(ap)^{\parallel p}=(pa)^{\parallel p}=(pap)^\sharp.
$$
\end{cor}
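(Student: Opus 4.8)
The plan is to deduce every assertion from the results already proved in this section by exploiting the single observation that an idempotent is a generalized inverse of itself: since $p^{2}=p$, one has $ppp=p$, so $\overline{d}=p$ is an admissible choice for the generalized inverse of $d=p$ in Theorem \ref{thm8} and Corollary \ref{cor9}. I would begin by recording the elementary simplifications produced by taking $d=\overline{d}=p$: namely $d\overline{d}=\overline{d}d=p$, whence $d\overline{d}\R d\overline{d}=\overline{d}d\R\overline{d}d=p\R p$, and $dad\overline{d}=\overline{d}dad=pap$, using $pp=p$ throughout. With these identifications in hand, the list of equivalences becomes a transcription of the earlier statements.

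Concretely, first I would apply Theorem \ref{thm8} with $\overline{d}=p$. Its equivalence (i)$\Leftrightarrow$(ii) reads ``$a$ is invertible along $p$ if and only if $pap\in(p\R p)^{-1}$'', which is precisely (i)$\Leftrightarrow$(iv) of the corollary; and its equivalence (ii)$\Leftrightarrow$(iv) reads ``$pap\in(p\R p)^{-1}$ if and only if $pap$ is group invertible with $p_{pap}=1-p$'', which is (iv)$\Leftrightarrow$(v) here. Next I would apply Corollary \ref{cor9} with $\overline{d}=p$: its items (ii) and (iii) become ``$ad\overline{d}=ap$ is invertible along $p$'' and ``$\overline{d}da=pa$ is invertible along $p$'', yielding (i)$\Leftrightarrow$(ii)$\Leftrightarrow$(iii), together with the identities $a^{\parallel p}=(ap)^{\parallel p}=(pa)^{\parallel p}$ coming from the closing assertion of that corollary.

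It remains to identify $a^{\parallel p}$ with $(pap)^\sharp$, and this is the one step that is not a pure transcription. By the final clause of Theorem \ref{thm8}, if $x\in p\R p$ is the inverse of $dad\overline{d}=pap$ inside $p\R p$, then $a^{\parallel p}=xd=xp$; moreover Remark \ref{rema4000} identifies this $x$ with the group inverse $(pap)^\sharp$. The point to verify is that $xp=x$: since $x\in p\R p$ we may write $x=pwp$ for some $w\in\R$, whence $xp=pwp\,p=pwp=x$ by $pp=p$. Therefore $a^{\parallel p}=xp=x=(pap)^\sharp$, completing the chain $a^{\parallel p}=(ap)^{\parallel p}=(pa)^{\parallel p}=(pap)^\sharp$. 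I expect the only genuine, though minor, obstacle to be this last normalization $xp=x$; everything else is a direct specialization of Theorem \ref{thm8}, Corollary \ref{cor9} and Remark \ref{rema4000} to the self-dual choice $\overline{d}=p$.
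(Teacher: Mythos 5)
Your proposal is correct and follows exactly the route the paper intends: its own proof is simply ``Apply Theorem \ref{thm8} and Corollary \ref{cor9}'', and you have carried out precisely that specialization with $d=\overline{d}=p$, including the (correct) normalization $xp=x$ for $x\in p\R p$ and the identification $x=(pap)^\sharp$ via Remark \ref{rema4000}.
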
 
\begin{proof}Apply Theorem \ref{thm8} and Corollary \ref{cor9}.
\end{proof}

\section{The set of invertible elements  along a fixed $d\in\hat{\R}$}

\indent In this section, given a unitary ring $\R$ and a regular element $d\in\R$,
the set of all invertible elements along $d$ will be fully characterized. Moreover, some special
cases will be also considered. To this end, the set under consideration will
be denoted by $\R^{\parallel d}$, i.e., $\R^{\parallel d}=\{a\in \R\colon  a^{\parallel d}\hbox{ exits}\}$. Note that if $\tilde{d}\in\hat{R}$
is such that $d\R=\tilde{d}\R$ and $\R d=\R \tilde{d}$, then $\R^{\parallel d}=\R^{\parallel\tilde{d}}$.
Next follows the main
theorem of this section.\par

\begin{thm}\label{thm11}Let $\R$ be a unitary ring and consider $d$ and  $\overline{d}\in \R$
such that $d\in\hat{\R}$ and $\overline{d}$ is  a generalized inverse of $d$.  \par
\noindent \rm (i) \it Then, the following identity holds:\par
$$
\R^{\parallel d}=\overline{d}(d\overline{d}\R d\overline{d})^{-1}+(1-\overline{d}d)\R d\overline{d}+\R(1-d\overline{d}).
$$ 
Moreover, given $x$, $y\in\R$ and $v\in (d\overline{d}\R d\overline{d})^{-1}$, then, if $w\in d\overline{d}\R d\overline{d}$
is such that $vw=wv=d\overline{d}$,
$$
(\overline{d}v)^{\parallel d}=(\overline{d}v + (1-\overline{d}d)x d\overline{d}+y(1-d\overline{d}))^{\parallel d}= wd.
$$
\noindent \rm (ii) In addition, \par
$$
\R^{\parallel d}=(\overline{d}d\R \overline{d}d)^{-1}\overline{d}+\overline{d}d\R(1- d\overline{d})+(1-\overline{d}d)\R.
$$ 
\noindent Furthermore, given $s$, $t\in\R$ and $z\in (\overline{d}d\R \overline{d}d)^{-1}$, then, if $u\in \overline{d}d\R \overline{d}d$
is such that $zu=uz=\overline{d}d$,
$$
(z\overline{d})^{\parallel d}=(z\overline{d} + \overline{d}ds (1- d\overline{d})+(1-d\overline{d})t)^{\parallel d}= du.
$$
\noindent (iii) In particular, if $p\in\R^\bullet\setminus\{0,1\}$, then\par
$$
\R^{\parallel p}=(p\R p)^{-1}+p\R (1-p)+(1-p)\R p+(1-p)\R (1-p),
$$ 
\noindent and if $r\in (p\R p)^{-1}$, $m\in (p\R (1-p)+(1-p)\R p+(1-p)\R (1-p))$, and $l\in (p\R p)^{-1}$ is such that 
$rl=lr=p$, then 
$$
r^{\parallel p}=(r+m)^{\parallel p}=l.
$$
\end{thm}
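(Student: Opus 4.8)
The plan is to prove (i) by double inclusion, to deduce (ii) by the obvious left/right symmetry, and to obtain (iii) by specialising $d=\overline{d}=p$. Throughout I write $e=d\overline{d}$ and $f=\overline{d}d$, which are idempotents satisfying $ed=d=df$, so that $(e\R e)^{-1}=(d\overline{d}\R d\overline{d})^{-1}$ and $(f\R f)^{-1}=(\overline{d}d\R\overline{d}d)^{-1}$ are exactly the unit groups of the corner rings appearing in Theorem \ref{thm8}. The whole argument rests on the elementary identity
$$
a=\overline{d}\,(dad\overline{d})+(1-\overline{d}d)\,ad\overline{d}+a\,(1-d\overline{d}),
$$
which holds for every $a\in\R$ (expand and cancel), together with its mirror
$$
a=(\overline{d}dad)\,\overline{d}+\overline{d}d\,a\,(1-d\overline{d})+(1-\overline{d}d)\,a.
$$

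For the inclusion $\subseteq$ in (i), suppose $a\in\R^{\parallel d}$. By Theorem \ref{thm8}(ii) the middle corner factor $dad\overline{d}$ lies in $(e\R e)^{-1}$, while trivially $(1-\overline{d}d)ad\overline{d}\in(1-\overline{d}d)\R d\overline{d}$ and $a(1-d\overline{d})\in\R(1-d\overline{d})$; the first identity then exhibits $a$ as an element of the claimed sum. For $\supseteq$, take $a=\overline{d}v+(1-\overline{d}d)x d\overline{d}+y(1-d\overline{d})$ with $v\in(e\R e)^{-1}$. First I would show $\overline{d}v\in\R^{\parallel d}$ by computing $d(\overline{d}v)d\overline{d}=eve=v\in(e\R e)^{-1}$ and invoking Theorem \ref{thm8}(ii); the final clause of that theorem, applied with the corner inverse $w$ of $v$ (so $vw=wv=e$), gives $(\overline{d}v)^{\parallel d}=wd$. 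The two remaining summands are a left multiple of $1-\overline{d}d$ and a right multiple of $1-d\overline{d}$, i.e.\ precisely the perturbations tolerated by Corollary \ref{cor10}; hence $a\in\R^{\parallel d}$ and $a^{\parallel d}=(\overline{d}v)^{\parallel d}=wd$, which is the displayed formula.

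Part (ii) is handled identically after replacing the first identity by its mirror and using Theorem \ref{thm8}(iii) and (v): now $\overline{d}dad\in(f\R f)^{-1}$, and for the base element $z\overline{d}$ one computes $\overline{d}d(z\overline{d})d=fzf=z$, so the final clause of Theorem \ref{thm8} yields $(z\overline{d})^{\parallel d}=du$ with $zu=uz=f$, the extra terms once more being instances of the Corollary \ref{cor10} perturbations. For (iii) I specialise $d=\overline{d}=p$, so that $e=f=p$; then $\overline{d}(p\R p)^{-1}=(p\R p)^{-1}$ because $pv=v$ on $p\R p$, and $\R(1-p)=p\R(1-p)+(1-p)\R(1-p)$, which turns the sum of (i) into the four-corner decomposition claimed. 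The inverse formula drops out of (i) as well: with $v=r$ and $w=l$ one gets $r^{\parallel p}=lp=l$, while $m$ ranges over the admissible perturbations, so $(r+m)^{\parallel p}=r^{\parallel p}=l$.

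The routine content is the verification of the two cancelling identities and the bookkeeping that each summand lands in its prescribed piece. The step that needs genuine care is matching the perturbation terms to Corollary \ref{cor10}: that corollary only tolerates \emph{right} multiples of $1-d\overline{d}$ and \emph{left} multiples of $1-\overline{d}d$, so I must keep straight on which side each idempotent factor sits; in particular the extra term in (ii) must be read as $(1-\overline{d}d)t$ (a left multiple of $1-\overline{d}d$), since a direct computation of $\overline{d}dad$ shows that a left multiple of $1-d\overline{d}$ would leave a residual term $\overline{d}d(1-d\overline{d})td$ that need not vanish. The other point to confirm cleanly is that corner elements are fixed by the ambient idempotents, $eve=v$ and $fzf=z$, since this is exactly what lets the final clause of Theorem \ref{thm8} deliver the inverse as $wd$ (respectively $du$) with no further work.
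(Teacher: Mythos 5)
Your proof is correct and follows essentially the same route as the paper: the same three-term decomposition $a=\overline{d}(dad\overline{d})+(1-\overline{d}d)ad\overline{d}+a(1-d\overline{d})$ (and its mirror), Theorem \ref{thm8} for the corner invertibility of $dad\overline{d}$ and the formula $wd$, Corollary \ref{cor10} for the perturbation terms, and the specialisation $d=\overline{d}=p$ for (iii). Your remark that the last perturbation in (ii) must be read as $(1-\overline{d}d)t$ rather than $(1-d\overline{d})t$ correctly identifies a typo in the displayed formula, consistent with the summand $(1-\overline{d}d)\R$ in the set decomposition.
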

\begin{proof}(i). Let $v\in (d\overline{d}\R d\overline{d})^{-1}$ and consider $a=\overline{d}v$. Thus, 
$dad\overline{d}=d\overline{d}vd\overline{d}=v$. Consequently, according to Theorem \ref{thm8},
$a^{\parallel d}$ exists and $a^{\parallel d}=wd$. Moreover, according to Corollary \ref{cor10},
$(a+ (1-\overline{d}d)x d\overline{d}+y(1-d\overline{d}))^{\parallel d}$ exists and $a^{\parallel d}=
 (a+ (1-\overline{d}d)x d\overline{d}+y(1-d\overline{d}))^{\parallel d}$. \par
\indent To prove the converse, let $a\in \R$ such that  $a^{\parallel d}$ exists.
Then
$$
a=\overline{d}dad\overline{d}+(1-\overline{d}d)ad\overline{d}+a(1-d\overline{d}).
$$
However, $\overline{d}dad\overline{d}= \overline{d}(dad\overline{d})$ and, according to Theorem \ref{thm8},
$dad\overline{d}\in (d\overline{d} \R d\overline{d})^{-1}$.\par
\noindent (ii). Apply a similar argument to the one used in the proof of statement (a).\par
\noindent (iii). Given $d=p\in\R^\bullet\setminus\{0,1\}$, consider $\overline{d}=p$. Then apply what has been prove. 
\end{proof}

\begin{rema}\label{rema12}\rm Under the same hypotheses in Theorem \ref{thm11} and using the same notation in this Theorem,
note the following facts.\par
 \begin{align*}
\noindent \hbox{\rm (i).} (1-\overline{d}d)\R d\overline{d}+\R(1-d\overline{d})&= \overline{d}d\R (1- d\overline{d})+(1-\overline{d}d)\R\\
&=(1-\overline{d}d)\R d\overline{d}+\overline{d}d \R(1-d\overline{d}) + (1-\overline{d}d) \R(1-d\overline{d}).\\
\end{align*}

\noindent (ii). $\overline{d}(d\overline{d}\R d\overline{d})^{-1}\subseteq \overline{d}d\R d\overline{d}$.
In addition, 
\begin{align*}
&\overline{d}(d\overline{d}\R d\overline{d})^{-1}\cap ((1-\overline{d}d)\R d\overline{d}+\R(1-d\overline{d}))=\emptyset,\\
&(1-\overline{d}d)\R d\overline{d}\cap\R(1-d\overline{d})=0, \, \overline{d}d\R(1-d\overline{d}\cap (1-\overline{d}d)\R(1-d\overline{d})=0.
\end{align*}

\noindent (iii). In particular, given $p\in\R^\bullet\setminus\{0,1\}$, 
\begin{align*}
&\hskip.7truecm (p\R p)^{-1}\cap (p\R (1-p)+(1-p)\R p+(1-p)\R (1-p))=\emptyset,\\
&\hskip.7truecm p\R (1-p)\cap  (1-p)\R p=p\R (1-p)\cap (1-p)\R (1-p)= \\
&\hskip.7truecm  (1-p)\R p\cap (1-p)\R (1-p)=0.\\
\end{align*}
\noindent (iv). The elements $w$, $u$ and $l$ in Theorem \ref{thm11} can be presented as in Remark \ref{rema4000} ($w$ and $u$) 
and Corollary \ref{cor1000} ($l$). For example, $w= (\overline{d}v)^{\parallel d}\overline{d}$, $u=\overline{d}(z\overline{d})^{\parallel d}$
and $l=r^\sharp$.
\end{rema}

\indent Applying Theorem \ref{thm11} it is possible to give another characterization of the inverse along an element.\par

\begin{thm}\label{thm19}Let $\R$ be a unitary ring and consider $d\in\hat{R}$. Then, if $\overline{d}$ is a generalized inverse
of $d$ and $a\in\R$, the following statements are equivalent.\par
\noindent \rm (i) \it The element $a$ is invertible along $d$.\par
\noindent \rm (ii) \it There exist unique $s$, $t\in \R$ such that $a=\overline{d} s +t$, $s\in\R^\sharp$, $p_s=1- d\overline{d}$
and $\overline{d}dtd\overline{d}=0$. In addition, $a^{\parallel d}=s^\sharp d$.\par
\noindent \rm (iii) \it There exist unique $u$, $v\in \R$ such that $a=u\overline{d}+v$, $u\in \R^\sharp$, $p_u=1-\overline{d}d$
and $\overline{d}d vd\overline{d}=0$. Moreover, $a^{\parallel d}=du^\sharp$.\par
\noindent In particular, if $p\in\R^\bullet$, necessary and sufficient for $a\in\R^p$ is that
there exist unique $s$, $t\in \R$ such that $a=s+t$, $s\in\R^\sharp$, $p_s=1-p$ and $ptp=0$. Furthermore, 
$a^{\parallel p}=s^\sharp$.	 
\end{thm}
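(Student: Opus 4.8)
The plan is to reduce the whole statement to Theorem \ref{thm8} and Remark \ref{rema4000}, letting the group invertible element $s$ play the role of the invertible element $dad\overline{d}$ of the subring $d\overline{d}\R d\overline{d}$. I will prove the equivalence (i) $\Leftrightarrow$ (ii) carefully; statement (iii) then follows verbatim from the right-hand versions in Theorem \ref{thm8} and Remark \ref{rema4000}, replacing $d\overline{d}$, $\overline{d}s$, $dad\overline{d}$ by $\overline{d}d$, $u\overline{d}$, $\overline{d}dad$. The final ``in particular'' assertion is merely the specialization $d=p=\overline{d}$: then $d\overline{d}=\overline{d}d=p$, the hypothesis $p_s=1-p$ forces $s\in p\R p$ so that $\overline{d}s=ps=s$ and $s^\sharp d=s^\sharp p=s^\sharp$, turning $a=\overline{d}s+t$ into $a=s+t$ and $\overline{d}dtd\overline{d}=0$ into $ptp=0$, with $a^{\parallel p}=s^\sharp$.

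For (i) $\Rightarrow$ (ii) I set $s=dad\overline{d}$ and $t=a-\overline{d}s=a-\overline{d}dad\overline{d}$, so that $a=\overline{d}s+t$ holds by construction. By Theorem \ref{thm8}(iv) the hypothesis that $a$ is invertible along $d$ gives $s\in\R^\sharp$ with $p_s=1-d\overline{d}$, and by Remark \ref{rema4000} the inverse of $dad\overline{d}$ in the subring $d\overline{d}\R d\overline{d}$ is exactly $s^\sharp$, whence $a^{\parallel d}=s^\sharp d$. The condition on $t$ is a one-line computation: using $d\overline{d}d=d$ and the idempotency of $d\overline{d}$,
$$
dtd\overline{d}=dad\overline{d}-(d\overline{d}d)ad\overline{d}\,d\overline{d}=dad\overline{d}-dad\overline{d}=0,
$$
so $\overline{d}dtd\overline{d}=\overline{d}(dtd\overline{d})=0$ as required.

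The delicate point, and the step I expect to be the main obstacle, is uniqueness, i.e. showing that $a$ forces $s=dad\overline{d}$. Suppose $a=\overline{d}s'+t'$ with $s'\in\R^\sharp$, $p_{s'}=1-d\overline{d}$ and $\overline{d}dt'd\overline{d}=0$. From $p_{s'}=1-d\overline{d}$ I obtain $s's'^\sharp=s'^\sharp s'=d\overline{d}$, hence $s'=d\overline{d}s'd\overline{d}$, and in particular $d\overline{d}s'=s'=s'd\overline{d}$. Multiplying $a=\overline{d}s'+t'$ on the left by $d$ and on the right by $d\overline{d}$ gives $dad\overline{d}=s'+dt'd\overline{d}$. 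The crux is that the hypothesis $\overline{d}dt'd\overline{d}=0$, which at first sight is weaker than the $dt'd\overline{d}=0$ one needs, is in fact enough: since $d\overline{d}d=d$,
$$
dt'd\overline{d}=d(\overline{d}dt'd\overline{d})=0,
$$
so $s'=dad\overline{d}=s$ and therefore $t'=t$. This single multiplication-by-$d$ trick is exactly what removes the apparent gap between the two conditions and pins down the decomposition.

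For (ii) $\Rightarrow$ (i) the same trick applies in reverse: from $\overline{d}dtd\overline{d}=0$ I again get $dt\,d\overline{d}=d(\overline{d}dtd\overline{d})=0$, so that $dad\overline{d}=d\overline{d}sd\overline{d}+dtd\overline{d}=s$, using $s=d\overline{d}sd\overline{d}$. Since $p_s=1-d\overline{d}$ means $s\in(d\overline{d}\R d\overline{d})^{-1}$, Theorem \ref{thm8}(ii) shows that $a$ is invertible along $d$, and Remark \ref{rema4000} yields $a^{\parallel d}=s^\sharp d$. Finally, statement (iii) is obtained by the mirror-image argument, the only change being that the reduction now reads $\overline{d}dvd=(\overline{d}dvd\overline{d})d=0$, again from $d\overline{d}d=d$, and that Theorem \ref{thm8}(iii),(v) and the second half of Remark \ref{rema4000} give $u=\overline{d}dad\in\R^\sharp$, $p_u=1-\overline{d}d$ and $a^{\parallel d}=du^\sharp$.
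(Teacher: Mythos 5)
Your proof is correct, and it takes a noticeably more direct route than the paper's. The paper derives Theorem \ref{thm19} from the set description of $\R^{\parallel d}$ in Theorem \ref{thm11}: condition (ii) is translated, via Remark \ref{rema13} (ii), into the statement that $\overline{d}s$ lies in $\overline{d}(d\overline{d}\R d\overline{d})^{-1}$ and $t$ in $(1-\overline{d}d)\R d\overline{d}+\R(1-d\overline{d})$, and uniqueness is obtained from the (implicit) directness of that sum, i.e.\ from the disjointness facts recorded in Remark \ref{rema12}. You instead bypass Theorem \ref{thm11} and work straight from Theorem \ref{thm8} and Remark \ref{rema4000}, identifying the group invertible summand explicitly as $s=dad\overline{d}$. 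The payoff is twofold: the equivalence of the ``weak'' condition $\overline{d}dtd\overline{d}=0$ with the ``strong'' one $dtd\overline{d}=0$ is handled by the single observation $dtd\overline{d}=d(\overline{d}dtd\overline{d})$ (using $d\overline{d}d=d$), and uniqueness becomes a two-line computation ($s'=d\overline{d}s'd\overline{d}$ from $p_{s'}=1-d\overline{d}$, hence $s'=dad\overline{d}$) rather than an appeal to a direct-sum decomposition that the paper never fully spells out. The paper's route, on the other hand, makes the theorem a transparent corollary of the structural description of $\R^{\parallel d}$ and keeps the three parts of the statement uniform. Both arguments ultimately rest on the corner-ring invertibility criterion of Theorem \ref{thm8}, and your handling of statement (iii) and of the idempotent case $d=p=\overline{d}$ is the correct mirror image and specialization, respectively.
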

\begin{proof}If $a$ is invertible along $d$, then according to Theorem \ref{thm11} (i), there exist $s\in\ (d\overline{d}\R d\overline{d})^{-1}$
and $t\in  (1-\overline{d}d)\R d\overline{d}+\R(1-d\overline{d})$ such that $a=\overline{d}s+t$. Note that according to Remark \ref{rema13} (ii) and Theorem \ref{thm11} (i), $s\in\R^\sharp$, $p_s=1- d\overline{d}$ and $a^{\parallel d}=s^\sharp d$. Moreover, $\overline{d}dtd\overline{d}=0$.\par

\indent On the other hand, if statement (ii) holds, then according to Remark \ref{rema13} (ii), $s\in (d\overline{d}\R d\overline{d})^{-1}$
and $t\in (1-\overline{d}d)\R d\overline{d}+\R(1-d\overline{d})$. Thus, according to Theorem \ref{thm11} (i), $a\in\R^{\parallel d}$.\par

\indent Let $s_1$, $s_2$ and $t_1$ and $t_2$ be such that $a=\overline{d}s_1 +t_1=\overline{d}s_2 +t_2$, $s_1$, $s_2\in
(d\overline{d}\R d\overline{d})^{-1}$ and $t_1$, $t_2\in   (1-\overline{d}d)\R d\overline{d}+\R(1-d\overline{d})$. Then, 
$t_1=t_2$ and $\overline{d}s_1=\overline{d}s_2$. However, multiplying by $d$ on the left side, $s_1=s_2$.\par

\indent Similar arguments prove the equivalence between statements (i) and (iii), applying in particular Theorem \ref{thm11} (ii)
instead of Theorem \ref{thm11} (i).  \par
\indent The last statement can be proved using similar arguments and applying Theorem \ref{thm11} (iii).
\end{proof}
\indent Next the particular cases of group, (generalized) Drazin and commuting Moore-Penrose invertible elements  will be considered.\par

\begin{thm}\label{thm14}\rm (a) \it Let $\R$ be a unitary ring and consider $a\in\R$.\par
\noindent \rm (i) \it If $a\in \R^\sharp$ and $n\in\mathbb N$, then
$$
\R^{\parallel a^n}=\R^{\parallel (a^\sharp)^n}=\R^{\parallel 1-p_a}.
$$
\noindent In addition, if $x$ belongs to one of these sets, then
$x^{\parallel a^n}=x^{\parallel (a^\sharp)^n}=x^{\parallel 1-p_a}$.\par
\noindent \rm (ii) \it If $a\in\R^D$, \rm ind\it$(a)=k$, $n\in\mathbb N$, $1\le j\le k-1$ and
$m\ge k$, then
$$ 
\R^{\parallel (a^d)^n}=\R^{\parallel a^m}=\R^{\parallel a^ja^da}=\R^{\parallel 1-a^\pi}.
$$
\noindent Moreover, if $x$ belongs to one of these sets, then
$x^{\parallel (a^d)^n}=x^{\parallel a^m}=x^{\parallel a^ja^da}=x^{\parallel 1-a^\pi}$.\par
\noindent \rm (iii) \it If $a\in\R^{gD}$ and $n\in\mathbb N$, then 
$$ 
\R^{\parallel (a^D)^n}=\R^{\parallel a^na^da}=\R^{\parallel 1-a^\pi}.
$$
\noindent Furthermore, if $x$ belongs to one of these sets, then
$x^{\parallel (a^d)^n}=x^{\parallel a^na^da}=x^{\parallel 1-a^\pi}$.\par
\noindent \rm (b) \it Let $\R$ be a unitary ring with involution.\par
\noindent \rm (iv) \it If $a\in\R$ is EP and $n\in \mathbb N$, then 
$$
\R^{\parallel a^n}=\R^{\parallel (a^\dag)^n}=\R^{\parallel (a^*)^n}=\R^{\parallel ((a^\dag)^*)^n}=\R^{\parallel aa^\dag}.
$$
\noindent If $x$ belongs to one of these sets, then
$x^{\parallel a^n}=x^{\parallel (a^\dag)^n}=x^{\parallel (a^*)^n}=x^{\parallel ((a^\dag)^*)^n}=x^{\parallel aa^\dag}$.
\end{thm}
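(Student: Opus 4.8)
The common engine behind all four items is the observation recorded after Definition~\ref{def2}: if $d,\tilde d\in\hat\R$ satisfy $d\R=\tilde d\R$ and $\R d=\R\tilde d$, then $\R^{\parallel d}=\R^{\parallel\tilde d}$ and $x^{\parallel d}=x^{\parallel\tilde d}$ for every $x\in\R$ for which either inverse exists. Hence in each item it is enough to show that all the displayed elements share one right ideal and one left ideal, namely those of the relevant idempotent: $1-p_a$ in (i), $1-a^\pi$ in (ii) and (iii), and $aa^\dag$ in (iv). Regularity of each displayed element, needed to apply the observation, comes for free from these ideal equalities: if $\R x=\R p$ with $p=p^2$, write $x=rp$ and $p=sx$; then $xp=x$, so $x=xp=xsx$ and $x\in\hat\R$.

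For (i) I would invoke Remark~\ref{rema13}(iv) to get $a^n,(a^\sharp)^n\in\R^\sharp$ with $(a^n)^\sharp=(a^\sharp)^n$ and $p_{a^n}=p_{(a^\sharp)^n}=p_a$, and then Remark~\ref{rema13}(i) to read off $a^n\R=a^n(a^n)^\sharp\R$. Since $a$ and $a^\sharp$ commute and $aa^\sharp$ is idempotent, $a^n(a^n)^\sharp=(aa^\sharp)^n=aa^\sharp=1-p_a$, so $a^n\R=(1-p_a)\R$; the same computation for $(a^\sharp)^n$ and the symmetric one for left ideals finish (i).

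Items (ii) and (iii) are the substance of the theorem, and the reduction to (i) is where the real work lies. Using the facts gathered in Section~2, for $m\ge k$ the element $a^m$ is group invertible with $(a^m)^\sharp=(a^d)^m$ and $1-p_{a^m}=aa^d=1-a^\pi$, so $a^m\R=(1-a^\pi)\R$ exactly as in (i); likewise $a^d\in\R^\sharp$ with $(a^d)^\sharp=a^2a^d$, whence $a^d(a^d)^\sharp=a^2(a^d)^2=aa^d=1-a^\pi$ and $(a^d)^n\R=(1-a^\pi)\R$. The mixed term is the only element not manifestly of standard form, and I would treat it by hand: from $a^ja^da=a^{j+1}a^d=(1-a^\pi)a^j$ one gets $a^ja^da\,\R\subseteq(1-a^\pi)\R$, while $a^{j+1}a^d(a^d)^j=a^{j+1}(a^d)^{j+1}=(aa^d)^{j+1}=aa^d=1-a^\pi$ gives the reverse inclusion; the left ideals are symmetric. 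Item (iii) is identical with $a^D$ replacing $a^d$, since a generalized Drazin inverse still satisfies $aa^D=a^Da$ idempotent, $a(a^D)^2=a^D$ and $a^\pi=1-aa^D$, so all the above computations go through verbatim (now for every $n\in\mathbb N$, there being no finite index).

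For (iv) the elements $a^n$, $(a^\dag)^n$ and $aa^\dag$ are already covered by (i), because EP forces $a^\dag=a^\sharp$ and $aa^\dag=a^\dag a=1-p_a$. The new ingredient is the starred elements. Here I would first record that for any $a\in\R^\dag$ one has $a^\dag\R=a^*\R$ and $\R a^\dag=\R a^*$, via $a^\dag=a^*(a^\dag)^*a^\dag$ together with $a^\dag a a^*=a^*$ (and the transposed identities for the left ideals); next that $a^*$ is again group invertible with $(a^*)^\sharp=(a^\sharp)^*=(a^\dag)^*$ and $a^*(a^*)^\sharp=(aa^\sharp)^*=aa^\dag$, so $p_{a^*}=p_a$. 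Applying (i) now to the group-invertible element $a^*$ yields $\R^{\parallel(a^*)^n}=\R^{\parallel((a^\dag)^*)^n}=\R^{\parallel 1-p_{a^*}}=\R^{\parallel aa^\dag}$, together with the corresponding equalities of the inverses $x^{\parallel\cdot}$; since $1-p_{a^*}=1-p_a=aa^\dag$, this glues onto the chain coming from $a$ and completes (iv). The single genuinely nonroutine point throughout is the mixed Drazin term, whose two ideal inclusions must be produced directly rather than inherited from Remark~\ref{rema13}.
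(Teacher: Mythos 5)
Your proof is correct, but its engine is genuinely different from the paper's. The paper derives part (i) from Theorem \ref{thm11}: it writes out the explicit descriptions of $\R^{\parallel a}$, $\R^{\parallel a^\sharp}$ and $\R^{\parallel 1-p_a}$ as sums of the form $\overline{d}(d\overline{d}\R d\overline{d})^{-1}+(1-\overline{d}d)\R d\overline{d}+\R(1-d\overline{d})$ and observes that the three descriptions coincide because $a$, $a^\sharp\in((1-p_a)\R(1-p_a))^{-1}$; the equality of the inverses, $x^{\parallel a}=x^{\parallel aa^\sharp}$, is then obtained by comparing the formulas $x^{\parallel a}=wa$ and $x^{\parallel aa^\sharp}=v$ that Theorem \ref{thm11} produces. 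You instead invoke the invariance of $\R^{\parallel d}$ and of $x^{\parallel d}$ under replacement of $d$ by any $\tilde d$ with $d\R=\tilde d\R$ and $\R d=\R\tilde d$ (recorded after Definition \ref{def2} and restated at the start of Section 5), and verify the ideal identities $a^n\R=(a^\sharp)^n\R=(1-p_a)\R$ and their left-sided analogues directly from Remark \ref{rema13}. This is more elementary: it bypasses Theorem \ref{thm11} entirely and yields the equality of the inverses for free from uniqueness, at the cost of not producing the explicit formulas that the paper's route gives as a byproduct. Your reductions in (ii)--(iv) match the paper's in substance (both apply (i) to $a^d$, $a^D$ and $a^*$ via $(a^d)^\sharp=a^2a^d$, $p_{a^d}=a^\pi$, $(a^*)^\sharp=(a^\dag)^*$, $p_{a^*}=p_a$); the paper absorbs the mixed term by noting $((a^d)^\sharp)^j=a^ja^da$, whereas you establish its two ideal inclusions by hand --- both computations are correct. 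The regularity observation you add (if $\R x=\R p$ with $p$ idempotent, then $x\in\hat{\R}$) is sound and fills a point the paper leaves implicit.
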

\begin{proof}(i). According to Theorem \ref{thm11},
\begin{align*}
\R^{\parallel a} &= a^\sharp ((1-p_a)\R (1-p_a))^{-1}+p_a\R (1-p_a)+\R p_a,\\
\R^{\parallel a^\sharp} &= a ((1-p_a)\R (1-p_a))^{-1}+p_a\R (1-p_a)+\R p_a,\\
\R^{\parallel 1-p_a} &= ((1-p_a)\R (1-p_a))^{-1}+p_a\R (1-p_a)+\R p_a.\\
\end{align*}

\noindent However, since $a$ and $a^\sharp\in ((1-p_a)\R (1-p_a))^{-1}$ (Remark \ref{rema13} (ii)),
 $$
a^\sharp ((1-p_a)\R (1-p_a))^{-1}=a ((1-p_a)\R (1-p_a))^{-1}=((1-p_a)\R (1-p_a))^{-1}.
$$
\noindent Thus, $\R^{\parallel a}=\R^{\parallel a^\sharp}=\R^{\parallel 1-p_a}$. In addition, since according to Remark \ref{rema13} (iv),
$a^n\in\R^\sharp$, $(a^n)^\sharp=(a^\sharp)^n$ and $p_{a^n}=p_a$, applying what has been proved to $a^n$,
$\R^{\parallel a^n}=\R^{\parallel (a^\sharp)^n}=\R^{\parallel 1-p_a}$ ($n\in\mathbb N$).\par

\indent Note that to prove the remaining statement, it is enoug to consider  the case $a$ group invertible.
Let $a\in\R^\sharp$ and $x\in\R^{\parallel a}$. According to Theorem \ref{thm11} (i) applied to $\overline{d}=a^\sharp$, $x^{\parallel a}=wa$, where
$w\in (aa^\sharp\R aa^\sharp)^{-1}$ is such that $waxaa^\sharp= axaa^\sharp w=aa^\sharp$. On the other hand,
according to Theorem \ref{thm11} (iii) applied to $aa^\sharp$, $x^{\parallel aa^\sharp}= v$, where $v\in (aa^\sharp\R aa^\sharp)^{-1}$ is 
such that $v aa^\sharp x aa^\sharp=aa^\sharp xaa^\sharp v=aa^\sharp$. Now well, since $a=aaa^\sharp$
$$
waaa^\sharp  xaa^\sharp =v aa^\sharp x aa^\sharp.
$$
However, since $aa^\sharp x aa^\sharp\in (aa^\sharp\R aa^\sharp)^{-1}$, 
$$
x^{\parallel a}=wa=v=x^{\parallel aa^\sharp}.
$$
\noindent (ii). Recall that acording to Remark \ref{rema13} (iii). $a^d\in\R^\sharp$, $(a^d)^\sharp=aa^da$ and
$p_{a^d}= a^\pi$. In addition, if $1\le j\le k-1$ and $m\ge k$ ($k=$ ind$(a)$), then it is not difficult to prove that 
$((a^d)^\sharp)^{j}=a^ja^da$ and $((a^d)^\sharp)^m=a^m$. To prove statement (ii), apply statement (i) to $a^d$.\par

\noindent (iii). As in the proof of statement (ii), acording to Remark \ref{rema13} (iii). $a^D\in\R^\sharp$, $(a^D)^\sharp=aa^Da$ and
$p_{a^D}= a^\pi$. Moreover, if $n\in\mathbb N$, then  $((a^D)^\sharp)^n=a^na^Da$. Then, apply statement (i) to $a^D$. \par

\noindent (iv). If $a\in\R$ is EP, then $a$, $a^\dag$, $a^*$ and $(a^\dag)^*\in \R^\sharp$. Note that $a^\sharp=a^\dag$ and 
$(a^*)^\sharp=(a^\dag)^*$. Moreover, $p_a=p_{a^\dag}=p_{a^*}=p_{(a^\dag)^*}=1-aa^\dag$. Then, apply statement (i).
\end{proof}

\indent Next a characterization of invertible elements along a group invertible element will be considered.\par

\begin{cor}\label{cor21}Let $\R$ be a unitary ring and consider $d\in\R^\sharp$ and $a\in\R$. Then, the following statements
are equivalent.\par
\noindent \rm (i) \it The element $a$ is invertible along $d$.\par
\noindent \rm (ii) \it There exist unique $s$, $t\in \R$ such that $a=s+t$, $s\in\R^\sharp$, $p_s=p_d$ and $(1-p_d)t(1-p_d)=0$.
Moreover,  $a^{\parallel d}= s^\sharp$.
\end{cor}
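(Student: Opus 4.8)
The plan is to reduce the statement to the idempotent case already settled in the last assertion of Theorem \ref{thm19}, exploiting the fact (established in Theorem \ref{thm14}) that invertibility along a group invertible element coincides with invertibility along its associated idempotent. Since $d\in\R^\sharp$, its spectral idempotent $p_d=1-dd^\sharp=1-d^\sharp d$ is well defined, so that $1-p_d=dd^\sharp\in\R^\bullet$. First I would invoke Theorem \ref{thm14} (i) with $n=1$, which yields $\R^{\parallel d}=\R^{\parallel 1-p_d}$ and, moreover, $x^{\parallel d}=x^{\parallel 1-p_d}$ for every $x$ in this common set. In particular, $a$ is invertible along $d$ if and only if $a$ is invertible along the idempotent $1-p_d$, and whenever either holds, $a^{\parallel d}=a^{\parallel 1-p_d}$.

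Next I would apply the final (``in particular'') assertion of Theorem \ref{thm19} to the idempotent $p=1-p_d$. This gives that $a$ is invertible along $1-p_d$ exactly when there exist unique $s$, $t\in\R$ with $a=s+t$, $s\in\R^\sharp$, $p_s=1-(1-p_d)=p_d$ and $(1-p_d)t(1-p_d)=0$, in which case $a^{\parallel 1-p_d}=s^\sharp$. Combining this with the previous step immediately produces the desired equivalence of (i) and (ii) together with the formula $a^{\parallel d}=a^{\parallel 1-p_d}=s^\sharp$, and the uniqueness of the pair $(s,t)$ is inherited verbatim from Theorem \ref{thm19}, since the equality of the invertibility sets makes the two decomposition problems literally identical.

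I do not expect a genuine computational obstacle here, as each step is a direct citation of a previously proved result. The only point requiring care is the recognition that the reduction to the idempotent $1-p_d$ via Theorem \ref{thm14} is precisely what puts the decomposition into the clean form $a=s+t$ with $a^{\parallel d}=s^\sharp$. A naive direct application of Theorem \ref{thm19} (ii) with $\overline{d}=d^\sharp$ would instead give $a=d^\sharp s+t$ with $a^{\parallel d}=s^\sharp d$, whose translation into the stated form would demand an awkward change of variable; routing the argument through Theorem \ref{thm14} sidesteps this entirely.
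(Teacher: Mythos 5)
Your proposal is correct and follows exactly the paper's own route: the paper's proof is precisely ``Apply Theorem \ref{thm14} (i) and Theorem \ref{thm19}'', i.e.\ reduce to the idempotent $1-p_d$ via $\R^{\parallel d}=\R^{\parallel 1-p_d}$ and then invoke the final assertion of Theorem \ref{thm19} with $p=1-p_d$. You have simply written out the details the authors left implicit, including the correct identification $p_s=1-(1-p_d)=p_d$ and the formula $a^{\parallel d}=a^{\parallel 1-p_d}=s^\sharp$.
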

\begin{proof}Apply Theorem \ref{thm14} (i) and Theorem \ref{thm19}.
\end{proof}

\begin{rema}\label{rema18}\rm Let $\R$ be a unitary ring. Clearly, if $d\in \R^{-1}\cup\R^\bullet$, then Corollary \ref{cor21} applies to $d$.
In addition, according to the proof of  Theorem \ref{thm14}, Corollary \ref{cor21} applies to $d=x^n$ ($x\in\R^\sharp$, $n\in\mathbb N$), to $d=x^nx^Dx$ ($x\in\R^{gD}$, $n\in\mathbb N$) or to $d=y^j$ ($y\in\R^D$, $j\in\mathbb N$, $j\ge \hbox{\rm ind}(y)$). Moreover, if $\R$ is a unital ring with an involution,
according again to Theorem \ref{thm14}, 
Corollary \ref{cor21} applies to $d= x^n$, $d=(x^\dag)^n$,  $d =(x^*)^n$ or  $d= ((x^*)^\dag)^n$ ($x\in \R$ an EP element, $n\in\mathbb N$).
\end{rema}

\indent Given a unitary ring $\R$ and $d\in \hat{\R}$, $a$, $b\in\R^{\parallel d}$ will be said \it to satisfy the reverse order law, \rm
if $ab\in \R^{\parallel d}$ and $(ab)^{\parallel d}= b^{\parallel d}a^{\parallel d}$. In the following theorem the elements $d\in\hat{\R}$ such that  the reverse order law holds for every pair of elements of $\R^{\parallel d}$ will be characterized. Note that the next Theorem also provides a 
characterization of group invertible elements.\par

\begin{thm}\label{thm15}Let $\R$ be a unitary ring and consider $d\in \hat{R}$.
Then, the following statements are equivalent.\par
\noindent \rm (i) \it The reverse order law holds for   any $a$ and $b\in\R^{\parallel d}$.\par
\noindent \rm (ii) \it The element $d$ is group invertible.\par
\end{thm}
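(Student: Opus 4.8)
The proof splits into the two implications. The direction (ii) $\Rightarrow$ (i) should be the easier one, so I would dispatch it first. Assume $d\in\mathcal R^\sharp$, fix a generalized inverse; the natural choice is $\overline d=d^\sharp$, so that $d\overline d=\overline d d=dd^\sharp$ is a central-looking idempotent $1-p_d$. Take any $a,b\in\mathcal R^{\parallel d}$. By Theorem~\ref{thm14}~(i) the set $\mathcal R^{\parallel d}$ coincides with $\mathcal R^{\parallel 1-p_d}$, and by Corollary~\ref{cor1000} (applied to the idempotent $p=1-p_d$) the inverse along $d$ is computed inside the corner ring $p\mathcal R p$, where $a^{\parallel d}=(pap)^\sharp$ and similarly for $b$ and $ab$. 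The reverse order law $(xy)^\sharp=y^\sharp x^\sharp$ for group inverses, however, is \emph{not} automatic; but here the relevant elements all live in the corner $p\mathcal R p$ where $pap,\,pbp$ are genuinely invertible (units of $p\mathcal R p$), not merely group invertible. For honest units the reverse order law is trivial, so I would show $p(ab)p=(pap)(pbp)$ — which holds because $p=1-p_d$ is the identity of the corner and multiplication of elements of the form $pa p$ respects the corner structure — and then read off $(ab)^{\parallel d}=(p(ab)p)^\sharp=(pbp)^{-1}_{p\mathcal R p}(pap)^{-1}_{p\mathcal R p}=b^{\parallel d}a^{\parallel d}$.

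**The hard direction (i) $\Rightarrow$ (ii).** This is where the real work lies. I would argue contrapositively: assuming the reverse order law holds universally on $\mathcal R^{\parallel d}$, produce the group inverse of $d$. The key observation is that $d$ itself need not be group invertible a priori, but $d\overline d d=d$ gives us the idempotents $e=d\overline d$ and $f=\overline d d$, and the inverse-along-$d$ machinery forces these to interact. A clean strategy is to exhibit two specific invertible-along-$d$ elements whose product, combined with the reverse order law, yields a commutation relation between $d$ and its spectral idempotent. Concretely, by Theorem~\ref{thm11} the simplest members of $\mathcal R^{\parallel d}$ are of the form $\overline d v$ with $v\in(d\overline d\mathcal R d\overline d)^{-1}$; taking $v=d\overline d$ gives $a=\overline d$ with $a^{\parallel d}=d$ (since then $dad\overline d=d\overline d$ is the identity of the corner). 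Thus $d=\overline d{}^{\parallel d}$, and the law applied to well-chosen pairs should force $d$ to commute with the idempotent $dd^{\parallel d}$-type object, which is precisely the condition for group invertibility.

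**Executing the obstruction.** The cleanest route I foresee is to pick $a$ and $b$ in $\mathcal R^{\parallel d}$ so that both $a^{\parallel d}$ and $b^{\parallel d}$ equal $d$ (or involve $d$ linearly), and then compare $(ab)^{\parallel d}$ computed directly against $b^{\parallel d}a^{\parallel d}=d^2$. Since any inverse along $d$ satisfies $a^{\parallel d}\mathcal R=d\mathcal R$ and $\mathcal R a^{\parallel d}=\mathcal R d$, the element $d^2$ being an inverse along $d$ of something forces $d^2\mathcal R=d\mathcal R$ and $\mathcal R d^2=\mathcal R d$; together with regularity of $d$, these two range conditions are exactly Remark~\ref{rema7}/standard criteria equivalent to $d\in\mathcal R^\sharp$ (an element is group invertible iff it is regular with $d\mathcal R=d^2\mathcal R$ and $\mathcal R d=\mathcal R d^2$). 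I expect the \textbf{main obstacle} to be arranging a single pair $(a,b)$ whose product lands back in $\mathcal R^{\parallel d}$ and whose reverse-order value is $d^2$ \emph{without} circular assumptions: one must verify $ab\in\mathcal R^{\parallel d}$ independently before invoking the law, and the candidate elements must be chosen via Theorem~\ref{thm8} so that $dad\overline d$ and $dbd\overline d$ are corner-units while $d(ab)d\overline d$ is not automatically one. Resolving this tension — showing that the \emph{failure} of $d$ to be group invertible produces a concrete pair violating the reverse order law — is the crux, and I would handle it by contradiction: if $d\notin\mathcal R^\sharp$ then $d^2\mathcal R\subsetneq d\mathcal R$, which lets me construct $a,b$ with $b^{\parallel d}a^{\parallel d}\notin\mathcal R^{\parallel d}$ or with the two sides having different ranges, contradicting (i).
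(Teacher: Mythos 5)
Your direction (i) $\Rightarrow$ (ii) is essentially the paper's argument. You choose $a=b$ with $a^{\parallel d}=d$ (the paper uses $a=b=\overline{d}d\overline{d}$; your candidate $\overline{d}$ works equally well, since $d\overline{d}\,d\overline{d}=d\overline{d}$ is the unit of the corner $d\overline{d}\R d\overline{d}$), read off $(ab)^{\parallel d}=b^{\parallel d}a^{\parallel d}=d^2$ from the law, and then use the defining property that any inverse along $d$ generates the same principal ideals as $d$ to obtain $d\R=d^2\R$ and $\R d=\R d^2$; with $d$ regular this gives $d\in\R^\sharp$ by Drazin's theorem, exactly as in the paper. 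Your worry about having to verify $ab\in\R^{\parallel d}$ ``independently before invoking the law'' is moot: the paper's definition of the reverse order law already includes the assertion $ab\in\R^{\parallel d}$, so statement (i) hands you both facts at once and no contradiction argument or contrapositive is needed.

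The genuine gap is in (ii) $\Rightarrow$ (i). The identity $p(ab)p=(pap)(pbp)$ on which your argument rests is false in general: $pabp=papbp+pa(1-p)bp$, and the cross term $pa(1-p)bp\in p\R p$ has no reason to vanish, so the corner projection $a\mapsto pap$ is not multiplicative. This is not a repairable slip. In $\R=M_2(\mathbb{C})$ with $p=\left(\begin{smallmatrix}1&0\\0&0\end{smallmatrix}\right)$ (idempotent, hence group invertible), Corollary \ref{cor1000} says $x\in\R^{\parallel p}$ exactly when $x_{11}\neq 0$; taking $a=\left(\begin{smallmatrix}1&1\\0&1\end{smallmatrix}\right)$ and $b=\left(\begin{smallmatrix}1&0\\-1&1\end{smallmatrix}\right)$, both lie in $\R^{\parallel p}$, yet $ab=\left(\begin{smallmatrix}0&1\\-1&1\end{smallmatrix}\right)$ has vanishing $(1,1)$ entry, so $ab\notin\R^{\parallel p}$ and the reverse order law fails. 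You should know that the paper's own proof of this direction founders on precisely the same point: writing $a=v+s$, $b=w+t$ with $v,w\in(p\R p)^{-1}$ and $s,t$ in the off-corner part, the product $st$ contributes the term $pa(1-p)bp$ to the $p\R p$-component, so the claimed decomposition $ab=vw+z$ with $z$ off-corner does not hold, and the implication (ii) $\Rightarrow$ (i) is in fact false as stated. So your proposal reproduces the paper's correct half faithfully and its incorrect half faithfully as well; the lesson is that invertibility of $pap$ and $pbp$ in the corner controls $a^{\parallel p}$ and $b^{\parallel p}$ but says nothing about $p(ab)p$.
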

\begin{proof}Let $\overline{d}\in\R$ such that $d=d\overline{d} d$. According to  Theorem \ref{thm11} (i), 
$\overline{d}d\overline{d}\in\R^{\parallel d}$, what is more, $(\overline{d}d\overline{d})^{\parallel d}=d$.\par
\indent Suppose that statement (i) holds and consider $a=\overline{d}d\overline{d}=b$. Then, $ab\in\R^{\parallel d}$ and according to
Theorem \ref{thm11} (i)-(ii), there exist $x\in\ (d\overline{d}\R d\overline{d})^{-1}$ and $u\in (\overline{d}d \R \overline{d}d)^{-1}$
such that $(ab)^{\parallel  d}=yd=dv$, where $y\in (d\overline{d}\R d\overline{d})^{-1}$ and $v\in (\overline{d}d \R \overline{d}d)^{-1}$ are such that $xy=yx=d\overline{d}$ and $uv=vu=\overline{d}d$. 
As a result, 
$$
d^2=b^{\parallel d}a^{\parallel d}=(ab)^{\parallel d}= yd=dv.
$$
Therefore, $d=xd^2$ and $d=d^2u$. However, according to \cite[Theorem 4]{D},
$d\in\R^\sharp$.\par  

\indent To prove the converse, note that according to Theorem \ref{thm14} (i), it is enough to consider the case $p\in\R^\bullet$, $p\neq 0, 1$.
In addition, according to Theorem \ref{thm11} (iii), there exist $v$ and $w\in (p\R p)^{-1}$ and $s$ and $t\in  
p\R (1-p)+(1-p)\R p+(1-p)\R (1-p)$ such that $a=v+s$, $b=w+t$, $a^{\parallel  p}=l$ and $b^{\parallel p }=m$,
where $vl=lv=p$ and $wm=mw=p$. However, a direct calculation shows
that there is $z\in p\R (1-p)+(1-p)\R p+(1-p)\R (1-p)$ such that $ab=vw+z\in\R^{\parallel p}$. Furthermore,
according to Theorem \ref{thm11} (iii), $(ab)^{\parallel  p}=ml=b^{\parallel  p}a^{\parallel  p}$.
\end{proof}

\indent In the following theorem, given a unitary ring $\R$ and $d\in\hat{\R}$, the invertibility along
elements related to $d$ will be studied.\par

\begin{thm}\label{thm16}Let $\R$ be a unitary ring and consider $d\in\hat{\R}$. Then, if $a\in\R$ and $u\in\R^{-1}$,
the following statements are equivalents.\par
\noindent \rm (i) \it  $a\in\R^{\parallel d}$.\par
\noindent \rm (ii) \it  $au^{-1}\in\R^{\parallel ud}$.\par
\noindent \rm (iii) \it  $u^{-1}a\in\R^{\parallel du}$.\par
\noindent Moreover, $\R^{\parallel ud}=\R^{\parallel d}u^{-1}$, $\R^{\parallel du}=u^{-1}\R^{\parallel d}$ and if one of the statements
holds, then $(au^{-1})^{\parallel ud}= u a^{\parallel d}$ and $(u^{-1}a)^{\parallel du}=a^{\parallel d}u$. 
\end{thm}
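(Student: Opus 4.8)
The plan is to prove the whole statement by establishing the single implication (i)$\Rightarrow$(ii) together with the inverse formula, and then to harvest everything else from the invertibility of $u$. The decisive observation will be that the construction used for (i)$\Rightarrow$(ii) is self-inverting: replacing $u$ by $u^{-1}$ sends the triple $(au^{-1},ud)$ back to $(a,d)$, so the converse costs nothing.

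First I would set $b=a^{\parallel d}$ and check directly that $ub$ is the inverse of $au^{-1}$ along $ud$. Since $b$ is an outer inverse of $a$, i.e. $bab=b$, one computes $(ub)(au^{-1})(ub)=uba\,u^{-1}u\,b=u(bab)=ub$, so $ub$ is an outer inverse of $au^{-1}$. For the one-sided ideal conditions of Definition~\ref{def2} I would use that $u\in\R^{-1}$ forces $u\R=\R=\R u$: then $(ub)\R=u(b\R)=u(d\R)=(ud)\R$ because $b\R=d\R$, while $\R(ub)=(\R u)b=\R b=\R d=(\R u)d=\R(ud)$ because $\R b=\R d$. Hence $ub$ satisfies Definition~\ref{def2} for $au^{-1}$ along $ud$, and by uniqueness $(au^{-1})^{\parallel ud}=ub=u\,a^{\parallel d}$. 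This proves (i)$\Rightarrow$(ii) and its formula.

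Next I would deduce (ii)$\Rightarrow$(i) with no further computation. The implication just proved holds for \emph{every} invertible element, so I would apply it to the element $au^{-1}$, to $ud$ (which lies in $\hat{\R}$: if $d=d\overline{d}d$ then $ud=(ud)(\overline{d}u^{-1})(ud)$), and to the invertible element $u^{-1}$. Using $(au^{-1})(u^{-1})^{-1}=a$ and $u^{-1}(ud)=d$, this yields $a\in\R^{\parallel d}$ together with $a^{\parallel d}=u^{-1}(au^{-1})^{\parallel ud}$, which is exactly the converse and is consistent with the formula above. The equivalence (i)$\Leftrightarrow$(iii) is entirely symmetric: I would verify that $bu$ is the inverse of $u^{-1}a$ along $du$, using $(bu)(u^{-1}a)(bu)=(bab)u=bu$, $(bu)\R=b(u\R)=b\R=d\R=(du)\R$ and $\R(bu)=(\R b)u=(\R d)u=\R(du)$, obtaining $(u^{-1}a)^{\parallel du}=bu=a^{\parallel d}u$, and then re-apply with $u^{-1}$ in place of $u$ to get the converse.

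Finally, the set identities follow formally from the equivalences. Since $a\in\R^{\parallel d}\Leftrightarrow au^{-1}\in\R^{\parallel ud}$, every $c\in\R^{\parallel ud}$ can be written $c=(cu)u^{-1}$ with $cu\in\R^{\parallel d}$, giving $\R^{\parallel ud}\subseteq\R^{\parallel d}u^{-1}$, while the reverse inclusion is the forward implication; hence $\R^{\parallel ud}=\R^{\parallel d}u^{-1}$, and symmetrically $\R^{\parallel du}=u^{-1}\R^{\parallel d}$. I expect no genuine obstacle here; the only points demanding care are the bookkeeping of left versus right ideals in the two verifications, and the recognition that the converse implications require no separate argument because substituting $u^{-1}$ for $u$ reverses the whole construction.
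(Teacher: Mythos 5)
Your proposal is correct, and every step checks out: $ub$ is an outer inverse of $au^{-1}$ because $(ub)(au^{-1})(ub)=u(bab)=ub$, the ideal equalities $(ub)\R=(ud)\R$ and $\R(ub)=\R(ud)$ follow from $u\R=\R=\R u$ together with $b\R=d\R$ and $\R b=\R d$, and the uniqueness of the inverse along an element (cited in the paper from \cite[Theorem 6]{M}) then gives $(au^{-1})^{\parallel ud}=ua^{\parallel d}$; the self-inverting substitution $u\mapsto u^{-1}$ (legitimate since $ud=(ud)(\overline{d}u^{-1})(ud)\in\hat{\R}$, which you verify) and the formal derivation of the set identities are also sound. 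However, your route is genuinely different from the paper's. The paper does not touch Definition \ref{def2} directly: it invokes the corner-ring criterion of Theorem \ref{thm8}, observing that $\overline{d}u^{-1}$ is a generalized inverse of $ud$ with $\overline{d}u^{-1}(ud)=\overline{d}d$ and $\overline{d}u^{-1}(ud)(au^{-1})(ud)=\overline{d}dad$, so that the invertibility condition in $(\overline{d}d\R\overline{d}d)^{-1}$ characterizing $au^{-1}\in\R^{\parallel ud}$ is literally the same condition as the one characterizing $a\in\R^{\parallel d}$; the formula $(au^{-1})^{\parallel ud}=ua^{\parallel d}$ is then extracted from the decomposition of Theorem \ref{thm11}(ii). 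Your argument is more elementary and self-contained (it needs only the definition and uniqueness, not the section-4 and section-5 machinery), and it makes the symmetry under $u\mapsto u^{-1}$ explicit, which the paper leaves implicit; what the paper's approach buys is that both directions of each equivalence come at once from a single invertibility condition, with no separate verification of the converse, and it keeps the theorem integrated with the structural description of $\R^{\parallel d}$ developed earlier. Either proof is acceptable.
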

\begin{proof}In first place note that if $d\in\hat{R}$, then $ud\in\hat{\R}$. In fact, if $\overline{d}\in\R$
is such that $d=d\overline{d}d$, then $ud= ud (\overline{d}u^{-1}) ud$. In addition, since $\overline{d}u^{-1} ud= \overline{d}d$, $(\overline{d}u^{-1}ud\R \overline{d}u^{-1}ud)^{-1}=(\overline{d}d\R \overline{d}d)^{-1}$
and $\overline{d}u^{-1} udau^{-1}ud=\overline{d}dad$. Consequently, according to Theorem \ref{thm8}, statements (i) and (ii) are 
equivalent. Furthermore, applying Theorem \ref{thm11} (ii), a direct calculation proves that $\R^{ud}=\R^{\parallel d}u^{-1}$.\par

\indent Let $a\in\R^{\parallel d}$. Then, according to Theorem \ref{thm11} (ii), there exist $z\in (\overline{d}d\R \overline{d}d)^{-1}$
and $m\in \overline{d}d\R(1- d\overline{d})+(1-\overline{d}d)\R$ such that $a=z\overline{d}+m$ and $a^{\parallel d}=(z\overline{d})^{\parallel d}=dw$,
where $w\in \overline{d}d\R \overline{d}d$ is such that $zw=wz=\overline{d}d$. Thus, $au^{-1}=z\overline{d}u^{-1}+mu^{-1}$ and 
according to Theorem \ref{thm11} (ii), $(au^{-1})^{\parallel ud}= (z\overline{d}u^{-1})^{\parallel ud}= udw=ua^{\parallel d}$.\par
\indent The equivalence between statements (i) and (iii) and the remaining identities can be proved using similar arguments.
\end{proof}

\section{Commutative inverses along an element}

\indent In this section, given a unitary ring $\R$, $d\in\hat{\R}$ and $a\in \R^{\parallel d}$, it will be characterized
when $aa^{\parallel d}= a^{\parallel d}a$. Note that in this case, since $a^{\parallel d}aa^{\parallel d}=a^{\parallel d}$,
$a^{\parallel d}\in \R^{\sharp}$, moreover, $(a^{\parallel d})^{\sharp}= a^2a^{\parallel d}$ (see Remark \ref{rema13} (iii)).
Next follows the first characterization of this section.\par

\begin{thm}\label{thm17}Let $\R$ be a unitary ring and consider $d\in\hat{\R}$ and $a\in\R^{\parallel d}$. Then,
the following statements are equivalents.\par
\noindent \rm (i) \it $aa^{\parallel d}= a^{\parallel d}a$.\par
\noindent \rm (ii) \it  $d\in \R^\sharp$ and $ap_d=p_da$.\par
\noindent \rm (iii) \it   $d\in \R^\sharp$ and $a=x+m$, where $x\in ((1-p_d)\R (1-p_d))^{-1}$ and $m\in p_d\R p_d$.
\end{thm}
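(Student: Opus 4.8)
The plan is to first show that statement (i) alone forces $d\in\R^\sharp$, so that the spectral idempotent $p_d$ and its complement $q:=1-p_d=dd^\sharp=d^\sharp d$ are available in all three cases; after that, everything can be read off from the Peirce decomposition of $a$ relative to $q$ and $p_d$ together with the corner-ring description of $a^{\parallel d}$.

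For the implication (i)$\Rightarrow d\in\R^\sharp$, write $b:=a^{\parallel d}$. Since $bab=b$ and, by hypothesis, $ab=ba$, the element $a$ is a commuting generalized inverse of $b$, so $b\in\R^\sharp$ by Remark \ref{rema13} (iii) (this is exactly the observation already recorded in the paragraph preceding the statement). Put $e:=bb^\sharp=b^\sharp b\in\R^\bullet$. By Remark \ref{rema13} (i) we have $e\R=b\R$ and $\R e=\R b$, while by the definition of $a^{\parallel d}$ we have $b\R=d\R$ and $\R b=\R d$; hence $e\R=d\R$ and $\R e=\R d$. From $d\in e\R$ and $d\in\R e$ one gets $ed=d=de$, and from $e\in d\R$ and $e\in\R d$ one gets $e=dz$ and $e=wd$ for some $z,w\in\R$. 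Substituting, $d=de=d^2z$ and $d=ed=wd^2$, so $d\in d^2\R\cap\R d^2$, and therefore $d\in\R^\sharp$ by \cite[Theorem 4]{D}.

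Now assume $d\in\R^\sharp$ (which holds in every case) and set $p:=p_d$, $q:=1-p=dd^\sharp$. By Theorem \ref{thm14} (i), $a^{\parallel d}=a^{\parallel q}$, and by Corollary \ref{cor1000} applied to the idempotent $q$ we have $qaq\in(q\R q)^{-1}$ and $b:=a^{\parallel d}=(qaq)^\sharp$; in particular $b$ lies in the corner, so $b=qbq$, $pb=bp=0$, and $b(qaq)=(qaq)b=q$. Writing $a=qaq+qap+paq+pap$ and using $pb=bp=0$, a short computation collapses all but two terms and gives $ab=q+(paq)b$ and $ba=q+b(qap)$. Hence $ab=ba$ if and only if $(paq)b=b(qap)$. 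The left-hand side lies in $p\R q$ and the right-hand side in $q\R p$; since $p\R q\cap q\R p=\{0\}$ (because $p,q$ are orthogonal complementary idempotents), both sides must vanish. Multiplying $(paq)b=0$ on the right by $qaq$ gives $paq=0$, and multiplying $b(qap)=0$ on the left by $qaq$ gives $qap=0$. Thus $ab=ba$ is equivalent to $paq=qap=0$, i.e. to $ap=pa$, which is statement (ii); conversely $ap=pa$ yields $paq=qap=0$ and hence $ab=ba=q$.

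Finally, (ii)$\Leftrightarrow$(iii) follows from the same decomposition: $ap_d=p_da$ is exactly $qap=paq=0$, i.e. $a=qaq+pap$, where $qaq\in(q\R q)^{-1}=((1-p_d)\R(1-p_d))^{-1}$ by Corollary \ref{cor1000} and $pap\in p_d\R p_d$; conversely any decomposition $a=x+m$ with $x\in((1-p_d)\R(1-p_d))^{-1}$ and $m\in p_d\R p_d$ forces $qap=paq=0$ and hence $ap_d=p_da$. The only genuinely delicate point in the argument is the corner computation of $ab$ and $ba$: because $b$ is merely an outer inverse of $a$, the products $ab$ and $ba$ are the idempotent $q$ corrected by off-diagonal terms rather than $1$, and the crux is that these corrections $(paq)b$ and $b(qap)$ sit in the complementary Peirce corners $p\R q$ and $q\R p$, so that their coincidence is possible only when each is zero.
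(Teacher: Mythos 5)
Your proof is correct. It differs from the paper's in two places worth noting. First, to get $d\in\R^\sharp$ from (i), the paper works with an arbitrary inner inverse $\overline{d}$ of $d$: it derives the identities $aa^{\parallel d}=\overline{d}d+(1-\overline{d}d)aa^{\parallel d}$ and $a^{\parallel d}a=d\overline{d}+a^{\parallel d}a(1-d\overline{d})$ from Theorem \ref{thm11}~(i), subtracts them, and multiplies by $d$ on each side to land on $d=d^2x$ and $d=yd^2$ before invoking \cite[Theorem 4]{D}. You instead exploit the observation (already recorded in the paragraph preceding the theorem) that $a^{\parallel d}\in\R^\sharp$ when it commutes with $a$, and transfer group invertibility to $d$ through the idempotent $e=a^{\parallel d}(a^{\parallel d})^\sharp$, which generates the same one-sided ideals as $d$; this reaches the same two memberships $d\in d^2\R\cap\R d^2$ by a cleaner, more conceptual route that avoids the explicit identities for $aa^{\parallel d}$ and $a^{\parallel d}a$. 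Second, once $d\in\R^\sharp$ is available, you reduce everything to the single idempotent $q=dd^\sharp$ via Theorem \ref{thm14}~(i) and Corollary \ref{cor1000} and run one Peirce-decomposition computation that handles (i)$\Leftrightarrow$(ii) in both directions at once, the crux being that the correction terms $(p_daq)a^{\parallel d}$ and $a^{\parallel d}(qap_d)$ live in the complementary corners $p_d\R q$ and $q\R p_d$ and so must both vanish; the paper instead proves the forward direction by first establishing $aa^{\parallel d}=a^{\parallel d}a=dd^\sharp$ and then $add^\sharp=dd^\sharp a$, and the converse by a separate appeal to Theorem \ref{thm11}~(iii). Your treatment of (ii)$\Leftrightarrow$(iii) is the same in substance as the paper's appeal to Corollary \ref{cor21}, just spelled out via the Peirce corners. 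What your version buys is a uniform mechanism (orthogonal Peirce corners intersecting in zero) for all the equivalences; what the paper's buys is that the first part runs with an arbitrary inner inverse $\overline{d}$ before $d^\sharp$ is known to exist, which keeps it closer to the machinery of Theorem \ref{thm11}.
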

\begin{proof}Note that according to Corollary \ref{cor21},
statement (ii) and (iii) are equivalent.\par

\indent Let $\overline{d}\in \R$
such that $d=d\overline{d}d$. Then, applying Theoem \ref{thm11} (i),
it is not difficult to prove that
\begin{align*}
aa^{\parallel d}&=\overline{d}d +(1-\overline{d}d)aa^{\parallel d},\\
a^{\parallel d}a&= d\overline{d} +a^{\parallel d}a (1-d\overline{d}).\\
\end{align*}
\indent Suppose that $aa^{\parallel d}= a^{\parallel d}a$. Then, 
$$
\overline{d}d=d\overline{d} +a^{\parallel d}a (1-d\overline{d})-(1-\overline{d}d)aa^{\parallel d}.
$$
In particular, multiplying by $d$ on the left:
$$
d=d^2\overline{d}+da^{\parallel d}a(1-d\overline{d}).
$$
\noindent Since $a^{\parallel d}\in d\R$, there is $x\in\R$ such that 
$$
d=d^2x.
$$
\noindent Similarly, since 
$$
d\overline{d}=\overline{d}d +(1-\overline{d}d)aa^{\parallel d}-a^{\parallel d}a (1-d\overline{d}),
$$
multiplying by $d$ on the right,
$$
d=\overline{d}d^2+(1-\overline{d}d)aa^{\parallel d}d.
$$
\noindent Since $a^{\parallel d}\in \R d$, there is $y\in\R$ such that 
$$
d=yd^2.
$$
Therefore, according to \cite[Theorem 4]{D}, $d\in \R^\sharp$.\par
\indent Next consider $d^\sharp$. Note that according to the structure 
of $a^{\parallel d}$ presented in Theorem \ref{thm11} (i) (using $\overline{d}=d^\sharp$),
it is easy to prove that
$$
a^{\parallel d}dd^\sharp=a^{\parallel d}=dd^\sharp a^{\parallel d},
$$
equivalently
$$
a^{\parallel d}p_d=0=p_d a^{\parallel d}. 
$$
Consequently, $p_d aa^{\parallel d}=0=a^{\parallel d}ap_d$ and
$$
aa^{\parallel d}=a^{\parallel d}a= dd^\sharp+a^{\parallel d}a(1-dd^\sharp)=dd^\sharp+a^{\parallel d}ap_d=dd^\sharp=d^\sharp d.
$$
As a result, 
$$
add^\sharp= aa^{\parallel d}a=dd^\sharp a,
$$
which implies that $ap_d=p_da$.\par
\indent To prove the converse, recall that according to Theorem \ref{thm14} (i),
$\R^{\parallel d}=\R^{\parallel 1-p_d}$. Moreover, since $ap_d=p_da$, according to Theorem \ref{thm11} (iii),
there exist $x\in (dd^\sharp\R dd^\sharp)^{-1}$ and $m\in p_d\R p_d$ such that
$a=x+m$. Moreover, according to Theorem \ref{thm11} (iii),
$a^{\parallel d}=w$, where $w\in dd^\sharp\R dd^\sharp$ is such that $xw=wx=dd^\sharp$.
However, a straightforward calculation proves that
$$
aa^{\parallel d}=xw=wx=a^{\parallel d}a=dd^\sharp.
$$ 
\end{proof}
\indent Thanks to Theorem \ref{thm17}, a characterization of group invertible elements
in terms of commuting inverse along an element can be derived.\par
\begin{cor}\label{cor19}Let $\R$ be a unitary ring and consider $d\in\hat{\R}$. Then,
the following statements are equivalents.\par
\noindent \rm (i) \it $d\in\R^\sharp$.\par
\noindent \rm (ii) \it There exists $a\in\R^{\parallel d}$ such that $aa^{\parallel d}= a^{\parallel d}a$.\par
\end{cor}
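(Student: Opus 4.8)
The plan is to derive this corollary almost directly from Theorem \ref{thm17}, whose equivalence already packages all the relevant structure; the statement is essentially a repackaging of the implication $\text{(i)}\Rightarrow\text{(ii)}$ there, together with an explicit witness for the converse direction.

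For the implication $\text{(i)}\Rightarrow\text{(ii)}$, the only real task is to exhibit a single element $a\in\R^{\parallel d}$ whose inverse along $d$ commutes with $a$. The natural candidate is $a=d$ itself. Indeed, recall from the preliminary discussion following \cite[Theorem 11]{M} that $d\in\R^\sharp$ if and only if $d$ is invertible along $d$, and that in this case $d^{\parallel d}=d^\sharp$. Hence, assuming $d\in\R^\sharp$, the element $a=d$ lies in $\R^{\parallel d}$ and satisfies $a^{\parallel d}=d^\sharp$. Since the group inverse commutes with its element, one gets $aa^{\parallel d}=dd^\sharp=d^\sharp d=a^{\parallel d}a$, which is exactly statement (ii). (Equivalently, one could invoke Theorem \ref{thm17}: with $a=d$ one has $ap_d=dp_d=0=p_d d=p_d a$, so the criterion of that theorem applies.)

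For the converse $\text{(ii)}\Rightarrow\text{(i)}$ no computation at all is needed: if some $a\in\R^{\parallel d}$ satisfies $aa^{\parallel d}=a^{\parallel d}a$, then the implication $\text{(i)}\Rightarrow\text{(ii)}$ of Theorem \ref{thm17} immediately forces $d\in\R^\sharp$. I do not expect any genuine obstacle, since the entire substance resides in Theorem \ref{thm17}; the one point deserving attention is the selection of a witness in the forward direction, and the trivial choice $a=d$ works precisely because its invertibility along $d$ is the very characterization of group invertibility recalled in section 2.
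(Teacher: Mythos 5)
Your proof is correct and follows essentially the same route as the paper: for (i)$\Rightarrow$(ii) the paper also takes the witness $a=d$, noting via \cite[Theorem 11]{M} that $d^{\parallel d}=d^\sharp$ and hence commutes with $d$, and for (ii)$\Rightarrow$(i) it likewise just invokes Theorem \ref{thm17}. No gaps.
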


\begin{proof}If $d\in \R^\sharp$, then consider $a=d$. In fact, according to \cite[Theorem 11]{M},
$a^{\parallel d}$ exists, what is more, $a^{\parallel d}=d^\sharp$.\par

\indent On the other hand, if statements (ii) holds, then apply Theorem \ref{thm17}.
\end{proof}

\indent Next follows the second characterization of this section.\par

\begin{thm}\label{thm5000}Let $\R$ be a unitary ring and consider $a\in\R$ and $d \in \hat{\R}$ such that $a$ is invertible 
along $d$. Then, the following statements are equivalent.\par
\noindent \rm (i) \it $a^{\parallel d} a = a a^{\parallel d}$.\par 
\noindent \rm (ii) $da \in \R d$ and $ad \in d \R$.
\end{thm}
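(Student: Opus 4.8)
The plan is to lean on Theorem~\ref{thm17}, which already characterises the commutativity in statement~(i) as the conjunction ``$d\in\R^\sharp$ and $ap_d=p_da$''. It therefore suffices to show that, for $a\in\R^{\parallel d}$, this conjunction is equivalent to statement~(ii), i.e.\ to $da\in\R d$ and $ad\in d\R$. Throughout I write $e=dd^\sharp=d^\sharp d$ whenever $d$ is group invertible, so that $p_d=1-e$, $ed=de=d$, and, by Remark~\ref{rema13}(i), $d\R=e\R$ and $\R d=\R e$; note that $ap_d=p_da$ is the same as $ae=ea$.

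For the implication from the conjunction to~(ii) I would assume $d\in\R^\sharp$ and $ae=ea$ and merely relocate the idempotent $e$. Since $d=ed$, one has $ad=a(ed)=(ea)d=e(ad)\in e\R=d\R$, while from $d=de$ one gets $da=d(ea)=d(ae)=(da)e\in\R e=\R d$. This direction is routine once $d\in\R^\sharp$ is available.

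The substantive implication is (ii)$\Rightarrow$ the conjunction, and its first half — deducing $d\in\R^\sharp$ from the membership conditions alone — is where I expect the real difficulty to lie, since group invertibility must be manufactured out of the outer-inverse identity. Set $b=a^{\parallel d}$, so that $bab=b$, and recall from $b\R=d\R$ and $\R b=\R d$ that $b\in d\R\cap\R d$ and $d\in b\R\cap\R b$. Writing $d=wb$ and $d=bz$ and using $bab=b$, I would first extract the identities $d=d(ab)$ and $d=(ba)d$. Next, writing $b=dw_0$ with $b\in d\R$ and invoking $ad\in d\R$, the product $ab=a(dw_0)=(ad)w_0$ lies in $d\R$; substituting into $d=d(ab)$ gives $d\in d^2\R$. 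Dually, $b\in\R d$ together with $da\in\R d$ forces $ba\in\R d$, and $d=(ba)d$ then yields $d\in\R d^2$. By \cite[Theorem~4]{D} these two memberships give $d\in\R^\sharp$.

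For the residual commutativity $ae=ea$ I would use the membership conditions a second time, now that $d\R=e\R$ and $\R d=\R e$. From $ad\in d\R=e\R$ we have $e(ad)=ad$; combining this with $ad=aed$ (because $d=ed$) and right-multiplying by $d^\sharp$ produces $eae=ae$. Symmetrically, $da\in\R d=\R e$ gives $(da)e=da$, and combining with $da=dea$ (because $d=de$) and left-multiplying by $d^\sharp$ produces $eae=ea$. Comparing the two computations yields $ae=ea$, i.e.\ $ap_d=p_da$, and Theorem~\ref{thm17} then closes the equivalence. The only delicate point is the bookkeeping in the third paragraph; everything else is a short idempotent manipulation.
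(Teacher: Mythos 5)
Your proof is correct, but it takes a genuinely different route from the paper's. The paper does not pass through Theorem~\ref{thm17} at all: it invokes the formula from \cite[Theorem 3.2]{MP}, namely that $da+1-d\overline{d}$ and $ad+1-\overline{d}d$ are invertible with $a^{\parallel d}=(da+1-d\overline{d})^{-1}d=d(ad+1-\overline{d}d)^{-1}$, rewrites the commutativity $a^{\parallel d}a=aa^{\parallel d}$ as the single identity $da(1-\overline{d}d)=(1-d\overline{d})ad$, and then gets both directions by multiplying by the idempotents $d\overline{d}$ and $\overline{d}d$ --- in particular it never needs to know that $d$ is group invertible. Your argument instead reduces everything to the characterization ``$d\in\R^\sharp$ and $ap_d=p_da$'' of Theorem~\ref{thm17}, and the substantive new content is your third paragraph: from $b=a^{\parallel d}$ you extract $d=d(ab)=(ba)d$, feed in $ad\in d\R$ and $da\in\R d$ to get $d\in d^2\R\cap\R d^2$, and conclude $d\in\R^\sharp$ by \cite[Theorem 4]{D} --- the same citation the paper uses inside the proofs of Theorems~\ref{thm15} and~\ref{thm17}; the idempotent bookkeeping $eae=ae=ea$ then finishes. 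What the paper's approach buys is brevity and independence from the (heavier) Theorem~\ref{thm17}, at the cost of importing the external formula from \cite{MP}. What yours buys is a proof internal to the paper's own results, plus the explicit intermediate fact --- not isolated in the paper --- that for $a\in\R^{\parallel d}$ the two membership conditions in (ii) already force $d$ to be group invertible. Both are sound; just make sure you state clearly that the invertibility of $a$ along $d$ (not only the outer-inverse identity $bab=b$) is what gives you $d=wb$ and $d=bz$, and note that the auxiliary remark ``$ad=aed$'' in your last paragraph is not actually needed: $ead=ad$ followed by right multiplication by $d^\sharp$ already yields $eae=ae$.
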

\begin{proof}Let $\overline{d}\in\R$ be such that $d=d \overline{d} d$.
Recall that according to  \cite[Theorem 3.2]{MP}, 
$da+1-d\overline{d}$ and $ad+1-\overline{d}d$ are invertible in $\R$ and
$a^{\parallel d} = (da+1-d\overline{d})^{-1}d = d(ad+1-\overline{d} d)^{-1}$.
Hence, 
\begin{align*}
\begin{split}
a^{\parallel d}a = a a^{\parallel d} & \Leftrightarrow 
(da+1-d\overline{d})^{-1}d a = a d(ad+1-\overline{d} d)^{-1} \\
& \Leftrightarrow d a (ad+1-\overline{d} d)= (da+1-d\overline{d}) a d \\
& \Leftrightarrow d a (1-\overline{d}d)= (1-d\overline{d}) a d.
\end{split}
\end{align*}

Assume that $a^{\parallel d}a = a a^{\parallel d}$. If the last equality is multiplied on the left by $d\overline{d}$, then $da(1-d^-d)=0$. 
Thus, $(1-dd^-)ad=0$. Therefore, $da = da\overline{d}d \in \R d$ and $ad=d\overline{d}ad \in d \R$.

To prove the converse, suppose that $da \in \R d$. Then there exists $u \in \R$ such that
$da = ud$. Consequently, $da\overline{d}d = ud\overline{d}d=ud=da$. 
Similarly, $ad \in d \R$ implies that $d\overline{d}ad=ad$.
As a result, $d a (1-\overline{d}d)= 0=(1-d\overline{d}) a d$, which is equivalent to $a^{\parallel d}a = a a^{\parallel d}$.

\end{proof}

\begin{rema}\label{rema6000}\rm Let $\R$ be a unitary ring with involution and consider
$a\in\R$ a Moore-Penrose invertible element. From Theorem \ref{thm5000} follows that
necessary and sufficient for $a$ to be EP is that 
$aa^* \in a^*\R$ and $a^*a \in \R a^*$.
\end{rema}

\section{Inner inverse}

\indent In this section inverses along an element that are also inner inverses will be studied.
Next follows a characterization of this object.\par

\begin{thm}Let $\R$ be a unitary ring and consider $a\in\R$ and $d \in \hat{\R}$ such that $a$ is invertible along $d$. The
following statements are equivalent:\par
\noindent {\rm (i)} $a^{\parallel d}$ is an inner inverse of $a$.\par
\noindent {\rm (ii)} $\R=d \R \oplus a^{-1}(0)$.\par
\noindent {\rm (iii)} $\R=\R d \oplus a_{-1}(0)$.	
\end{thm}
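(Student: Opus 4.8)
The plan is to reduce everything to Remark~\ref{rema1}~(iv) by writing $b = a^{\parallel d}$. By hypothesis $a$ is invertible along $d$, so $b$ exists, is an outer inverse of $a$ (that is, $b = bab$), and satisfies $b\R = d\R$ together with $\R b = \R d$. The first observation is that statement (i), namely that $b$ is an inner inverse of $a$, is precisely the condition $a = aba$.

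First I would invoke Remark~\ref{rema1}~(iv), whose standing hypothesis (that $b$ be an outer inverse of $a$) is exactly what we have. It yields the equivalence of $a = aba$ with $\R = b\R \oplus a^{-1}(0)$ and with $\R = \R b \oplus a_{-1}(0)$. It then remains only to transport these decompositions from $b$ to $d$. Since $b\R = d\R$, the right-ideal decomposition $\R = b\R \oplus a^{-1}(0)$ is literally the same statement as $\R = d\R \oplus a^{-1}(0)$, i.e.\ statement (ii); and since $\R b = \R d$, the left-ideal decomposition $\R = \R b \oplus a_{-1}(0)$ coincides with $\R = \R d \oplus a_{-1}(0)$, i.e.\ statement (iii). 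Chaining these gives (i) $\Leftrightarrow$ (ii) $\Leftrightarrow$ (iii).

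Because the substantive content has been packaged into Remark~\ref{rema1}~(iv), I do not expect a genuine obstacle; the whole proof is the cited equivalence followed by the two substitutions $b\R = d\R$ and $\R b = \R d$. The only point deserving care is the verification of that remark itself, should one wish to make the argument self-contained. There the forward implication is routine: assuming $a = aba$, for any $x \in \R$ one writes $x = bax + (x - bax)$, where $bax \in b\R$ and $a(x - bax) = ax - (aba)x = 0$, and the sum is direct because any $y = bs \in b\R \cap a^{-1}(0)$ satisfies $y = bay = b(ay) = 0$. The converse uses the idempotency $abab = ab$ (immediate from $bab = b$): decomposing $1 = bs + k$ with $ak = 0$ gives $a = abs$, whence $aba = (abab)s = abs = a$. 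The left-handed versions are symmetric, working with $ba$, $a_{-1}(0)$ and $\R b$. These are exactly the calculations already hidden inside Remark~\ref{rema1}~(iv), so in the actual proof I would simply cite it and perform the substitutions above.
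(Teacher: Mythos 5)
Your proposal is correct and follows essentially the same route as the paper: set $b=a^{\parallel d}$, use $b\R=d\R$ and $\R b=\R d$ from Definition \ref{def2}, and invoke Remark \ref{rema1} (iv). The additional verification of the remark itself is sound but not needed, since the paper simply cites it.
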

\begin{proof}Since $a$ is invertible along $d$, $a^{\parallel d}\R= d\R$ and 
$\R a^{\parallel d}=\R d$. Apply then Remark \ref{rema1} (iv). 
\end{proof}

\indent In the following theorem some special cases will be considered.\par

\begin{thm}\label{thm701}Let $\R$ be a unitary ring and consider $a\in\R$ and $d \in \hat{\R}$ such that $a$ is invertible along $d$
and  $a^{\parallel d}$ is an inner inverse of $a$. The following statements hold.\par
\noindent {\rm (i)} If $\overline{d}$ is an inner inverse of $d$, then $a^{\parallel d}$ is an inner and outer inverse of $\overline{d}dad\overline{d}$.\par
\noindent {\rm (ii)} If $d$ is group invertible, then $a^{\parallel d} = (d^\# d a dd^\#)^\#$.\par
\noindent {\rm (iii)} If the ring $\R$ has an involution and $d$ is Moore-Penrose invertible, then $a^{\parallel d} = (d^\dag d a dd^\dag)^\dag$.
\end{thm}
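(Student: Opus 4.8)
The plan is to set $b=a^{\parallel d}$ once and for all and to make part (i) carry the whole theorem. First I would record the two absorption identities hidden in the hypotheses $b\R=d\R$ and $\R b=\R d$ once an inner inverse $\overline d$ of $d$ is fixed. From $d=d\overline d d$ one gets $d\R=d\overline d\R$ and $\R d=\R\overline d d$, and since $d\overline d,\overline d d\in\R^\bullet$, the memberships $b\in d\overline d\R$ and $b\in\R\overline d d$ give
$$
d\overline d\,b=b,\qquad b\,\overline d d=b .
$$
Writing $c=\overline d d a d\overline d$, pushing these idempotents through yields $cb=\overline d d\,ab$ and $bc=ba\,d\overline d$, whence
$$
cbc=\overline d d\,(aba)\,d\overline d=\overline d d a d\overline d=c,\qquad bcb=ba\,(d\overline d\,b)=bab=b .
$$
The first identity is exactly where the hypothesis $aba=a$ (i.e.\ that $a^{\parallel d}$ is an inner inverse of $a$) is used; the second uses only $bab=b$. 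This establishes (i): $b$ is simultaneously an inner and an outer inverse of $c$.

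For (ii) and (iii) I would invoke (i) with the canonical inner inverse of $d$: take $\overline d=d^\sharp$ when $d\in\R^\sharp$ and $\overline d=d^\dag$ when $d\in\R^\dag$. In either case (i) gives that $b$ is an inner and outer inverse of $c$, where $c=d^\sharp d a d d^\sharp$ respectively $c=d^\dag d a d d^\dag$. What is left is to promote this to a \emph{group} inverse, respectively a \emph{Moore--Penrose} inverse, i.e.\ to identify the idempotents $cb$ and $bc$; this is the only genuinely new step, and I expect it to be the main obstacle. The tool is Remark \ref{rema1}(ii), which gives $\R ab=\R b$ and $b\R=ba\R$; combined with $\R b=\R d$ and $b\R=d\R$ this reads $\R(ab)=\R d$ and $(ba)\R=d\R$, and both $ab$ and $ba$ are idempotent by Remark \ref{rema1}(i).

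In the group case put $p=dd^\sharp=d^\sharp d$, so $d\R=p\R$ and $\R d=\R p$. Then $p\in\R(ab)$ forces $p(ab)=p$ and $p\in(ba)\R$ forces $(ba)p=p$, giving
$$
cb=d^\sharp d\,(ab)=p(ab)=p,\qquad bc=(ba)\,dd^\sharp=(ba)p=p .
$$
Hence $cb=bc$, and with $cbc=c$, $bcb=b$ from (i) these are precisely the three defining relations of the group inverse, so $a^{\parallel d}=b=c^\sharp=(d^\sharp d a d d^\sharp)^\sharp$. In the Moore--Penrose case set $e=dd^\dag$ and $f=d^\dag d$, which are \emph{Hermitian} idempotents by the defining equations of $d^\dag$; now $d\R=e\R$ and $\R d=\R f$ yield in the same way
$$
cb=f\,(ab)=f,\qquad bc=(ba)\,e=e ,
$$
so $cb$ and $bc$ are Hermitian. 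Together with the inner/outer relations from (i) this gives the four Moore--Penrose equations for $c$, whence $a^{\parallel d}=b=c^\dag=(d^\dag d a d d^\dag)^\dag$.
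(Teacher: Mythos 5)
Your proposal is correct and follows essentially the same route as the paper: part (i) is the same absorption computation built on $d\overline{d}\,a^{\parallel d}=a^{\parallel d}=a^{\parallel d}\,\overline{d}d$, and parts (ii)--(iii) are settled, as in the paper, by identifying $cb$ and $bc$ with $d^\sharp d$ and $dd^\sharp$ (resp.\ $d^\dag d$ and $dd^\dag$). The only cosmetic difference is that you justify these last identities via the idempotency of $ab$, $ba$ together with the ideal equalities $\R ab=\R d$ and $ba\R=d\R$, whereas the paper invokes $a^{\parallel d}ad=d=da\,a^{\parallel d}$ directly.
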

\begin{proof}Note that according to Definition \ref{def2}, $a^{\parallel d}\overline{d}d=a^{\parallel d}=d\overline{d}a^{\parallel d}$.\par 

\noindent (i). From the previous observation, 
$$
a^{\parallel d}(\overline{d}dad\overline{d})a^{\parallel d} = (a^{\parallel d}\overline{d}d)a(d\overline{d}
a^{\parallel d}) = a^{\parallel d}aa^{\parallel d}= a^{\parallel d}
$$

 and 
$$(\overline{d}dad\overline{d})a^{\parallel d}(\overline{d}dad\overline{d}) = \overline{d}da(d\overline{d}a^{\parallel d}\overline{d}d)ad\overline{d} =
 \overline{d}daa^{\parallel d}ad\overline{d} = \overline{d}dad\overline{d}.
$$
	
\noindent (ii). It remains to prove that $a^{\parallel d}(d^\sharp d a d d^\sharp) = (d^\sharp d a d d^\sharp ) a^{\parallel d}$. 
This follows from $a^{\parallel d}(d^\sharp d a d d^\sharp) = (a^{\parallel d}d^\sharp d)add^\sharp = a^{\parallel d}add^\sharp=dd^\sharp$ and 
	$(d^\sharp d a d d^\sharp )a^{\parallel d} = d^\sharp d a a^{\parallel d}=d^\sharp d$ (Theorem \ref{thm11} (i)-(ii)).
	
\noindent (iii). It is enough to prove that $a^{\parallel d}(d^\dag d a d d^\dag)$ and 
	$(d^\dag d a d d^\dag)a^{\parallel d}$ are Hermitian.
	In fact,  $a^{\parallel d}(d^\dag d a d d^\dag) = dd^\dag$ and 
	$(d^\dag d a d d^\dag)a^{\parallel d} = d^\dag d$.
\end{proof}

\indent In particular, when EP elements are considered, new expressions of the group and Moore-Penrose
inverse can be obtained.

\begin{cor}\label{cor705}Let $\R$ be a unitary ring with an involution and let $a \in \R$
be EP. The following statements hold.\par 
\noindent {\rm (i)} $a^\dag = ((aa^\sharp )^* a (a^\sharp a)^*)^\sharp$.\par
\noindent {\rm (ii)} $a^\sharp = (a^\dag a^3a^\dag)^\dag$.
\end{cor}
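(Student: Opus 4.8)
The plan is to derive both identities from Theorem \ref{thm701} by choosing $d$ appropriately and invoking the EP facts recalled in Section 2. Throughout, recall that since $a$ is EP we have $a\in\R^\sharp\cap\R^\dag$ with $a^\sharp=a^\dag$, and moreover $a^*\in\R^\sharp$ with $(a^*)^\sharp=(a^\sharp)^*$ (as noted in the proof of Theorem \ref{thm14} (iv)).

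For statement (i), I would take $d=a^*$. Since $a$ is EP, $d=a^*$ is regular (being group invertible) and, by the characterization of the Moore-Penrose inverse recalled in Section 2, $a$ is invertible along $a^*$ with $a^{\parallel a^*}=a^\dag$. As $a^\dag$ satisfies $aa^\dag a=a$, the inverse $a^{\parallel d}=a^\dag$ is also an inner inverse of $a$, so the hypotheses of Theorem \ref{thm701} are met. Because $d=a^*$ is group invertible with $d^\sharp=(a^\sharp)^*$, Theorem \ref{thm701} (ii) yields $a^\dag=(d^\sharp dad d^\sharp)^\sharp$. It then remains to simplify, using that the involution is an anti-isomorphism, $d^\sharp d=(a^\sharp)^*a^*=(aa^\sharp)^*$ and $dd^\sharp=a^*(a^\sharp)^*=(a^\sharp a)^*$, which gives $d^\sharp dadd^\sharp=(aa^\sharp)^*a(a^\sharp a)^*$ and hence the claimed formula.

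For statement (ii), I would instead take $d=a$. Since $a$ is group invertible, $a$ is invertible along $a$ with $a^{\parallel a}=a^\sharp$ (again by the characterization recalled in Section 2), and $a^\sharp$ is an inner inverse of $a$, so Theorem \ref{thm701} applies once more. As $a$ is EP it is Moore-Penrose invertible with $d^\dag=a^\dag$, whence Theorem \ref{thm701} (iii) gives $a^\sharp=(d^\dag dad d^\dag)^\dag$. Substituting $d=a$ and $d^\dag=a^\dag$ and collecting the middle factors produces $d^\dag dadd^\dag=a^\dag a\cdot a\cdot a\cdot a^\dag=a^\dag a^3a^\dag$, yielding the desired identity.

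Neither step presents a genuine obstacle; the only points requiring care are the verifications that the chosen $d$ is regular and that the corresponding $a^{\parallel d}$ is indeed an inner inverse of $a$, both of which are immediate from the defining equations of $a^\dag$ and $a^\sharp$. The remaining work is purely a matter of pushing the involution through the factors $d^\sharp d$ and $dd^\sharp$ in the first case, and reassembling the product $d^\dag dadd^\dag$ in the second.
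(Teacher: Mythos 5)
Your proposal is correct and follows essentially the same route as the paper: both arguments specialize Theorem \ref{thm701} (ii) with $d=a^*$ (using $a^{\parallel a^*}=a^\dag$ and $(a^*)^\sharp=(a^\sharp)^*$) for the first identity, and Theorem \ref{thm701} (iii) with $d=a$ (using $a^{\parallel a}=a^\sharp$ and $d^\dag=a^\dag$) for the second. The only difference is that you spell out the verification of the hypotheses and the simplification of $d^\sharp d a d d^\sharp$ and $d^\dag d a d d^\dag$, which the paper leaves implicit.
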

\begin{proof}Recall that $a^\sharp=a^\dag$, $a^\dag=a^{\parallel a^*}$ and $a^\sharp=a^{\parallel a}$
(\cite[Theorem 11]{M}). In addition, recall that $a^*$ is group invertible and $(a^*)^\sharp=(a^\sharp)^*$.
Apply then Theorem \ref{thm701}.
\end{proof}

\indent To present more expressions of the group and the Moore-Penrose inverse,
the following theorem will be useful. 

\begin{thm}\label{thm702}Let $\R$ be a unitary ring and consider $a \in \R$ and $d\in\hat{\R}$ such that $a$ invertible along $d$. 
Then, if $x$ is an inner inverse of $dad$, $a^{\parallel d} = dxd$.
\end{thm}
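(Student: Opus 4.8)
The plan is to reduce the statement to the explicit formula for the inverse along an element established in \cite[Theorem 3.2]{MP}, which has already been invoked in the proof of Theorem \ref{thm5000}. Fix a generalized inverse $\overline{d}$ of $d$; this exists because $d\in\hat{\R}$. According to \cite[Theorem 3.2]{MP}, both $p=da+1-d\overline{d}$ and $q=ad+1-\overline{d}d$ are invertible in $\R$, and
$$
a^{\parallel d}=p^{-1}d=dq^{-1}.
$$
The whole argument will rest on rewriting $dad$ through these two invertible elements.

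The key observation is that $dad$ factors as $dad=pd=dq$. Indeed, since $d\overline{d}d=d$, one computes $pd=(da+1-d\overline{d})d=dad+d-d\overline{d}d=dad$ and, similarly, $dq=d(ad+1-\overline{d}d)=dad+d-d\overline{d}d=dad$. Thus the correction terms $1-d\overline{d}$ and $1-\overline{d}d$ are annihilated on the appropriate side by $d$, which is exactly what lets $p$ and $q$ play the role of invertible one-sided factors of $dad$ relative to $a^{\parallel d}$.

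With these factorizations in hand, the conclusion follows by cancellation. Starting from the inner-inverse identity $dad\,x\,dad=dad$, I would replace the first and the right-hand occurrences of $dad$ by $pd$, obtaining $pd\,x\,dad=pd$; left-multiplying by $p^{-1}$ removes $p$ and yields $dx(dad)=d$. Replacing the remaining $dad$ by $dq$ gives $(dxd)q=d$, and right-multiplying by $q^{-1}$ produces $dxd=dq^{-1}=a^{\parallel d}$, as desired. The symmetric route, cancelling $q$ first and then $p$, gives $dxd=p^{-1}d=a^{\parallel d}$ and serves as a consistency check.

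There is no serious obstacle here: once the factorizations $dad=pd=dq$ are recorded, the result is pure cancellation by units. The only point requiring care is to keep track of which copy of $dad$ is rewritten as $pd$ and which as $dq$, so that the invertible factors end up on the correct sides for cancellation; in particular one should note that $dad$ is in general \emph{not} equal to $pdq$, so the three occurrences cannot be rewritten simultaneously.
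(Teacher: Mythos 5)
Your proof is correct, but it follows a genuinely different route from the paper's. The paper argues via kernel equalities: from $dadxdad=dad$ it deduces $d(adxdad-ad)=0$, then uses Theorem \ref{thm5} (xiii) (which gives $(a^{\parallel d})^{-1}(0)=d^{-1}(0)$ and $(a^{\parallel d})_{-1}(0)=d_{-1}(0)$) twice to replace the annihilating $d$'s by $a^{\parallel d}$, arriving at $a^{\parallel d}adxdaa^{\parallel d}=a^{\parallel d}$, and finally invokes $a^{\parallel d}ad=d=daa^{\parallel d}$ from \cite[Lemma 3]{M} to collapse this to $dxd=a^{\parallel d}$. You instead lean on the explicit unit formula $a^{\parallel d}=(da+1-d\overline{d})^{-1}d=d(ad+1-\overline{d}d)^{-1}$ from \cite[Theorem 3.2]{MP} (already used in the proof of Theorem \ref{thm5000}), observe the clean factorizations $dad=pd=dq$, and finish by cancelling the units $p$ and $q$ on the appropriate sides. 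Both arguments are short and both import one external result; yours has the advantage of being pure unit cancellation once the factorizations are written down (and your warning that the three copies of $dad$ cannot all be rewritten simultaneously is a sensible precaution), while the paper's stays within its own Theorem \ref{thm5} machinery and incidentally re-proves the useful identities $a^{\parallel d}ad=d=daa^{\parallel d}$ in context. Either proof is acceptable.
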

\begin{proof}Since $dadxdad=dad$, $d(adxdad-ad)=0$. According to Theorem \ref{thm5} (xiii), $a^{\parallel a}(adxdad-ad)=0$, i.e.,
	$a^{\parallel a}adxdad=a^{\parallel a}ad$. According again to Theorem \ref{thm5} (xiii), $a^{\parallel a}adxdaa^{\parallel a}=a^{\parallel a}aa^{\parallel a}=a^{\parallel a}$. However, according to \cite[Lemma 3]{M}, $a^{\parallel a}ad =d=daa^{\parallel a}$. Therefore,
$dxd=a^{\parallel a}$.
\end{proof}

\begin{cor}Let $\R$ be a unitary ring and consider $a\in \R$. The following statements hold.\par
\noindent {\rm (i)} If $a$ is group invertible and $\overline{a}$ is an inner inverse of $a$, then $a\overline{a}a^\sharp = a^\sharp = a^\sharp \overline{a}a$.\par
\noindent {\rm (ii)} If $a$ is group invertible and $x$ is an inner inverse of $a^3$, then $a^\sharp = axa$.\par 
\noindent If in addition $\R$ has an involution, then the following statements hold.\par
\noindent {\rm (iii)} If $a$ is Moore-Penrose invertible and $\overline{a}$ is an inner inverse of $a$, then $a^\dag (a\overline{a})^* = a^\dag = 
(\overline{a}a)^* a^\dag$.\par
\noindent {\rm (iv)} If $a$ is Moore-Penrose invertible and $x$ is an inner inverse of $a^*aa^*$, then $a^\dag = a^*xa^*$.
\end{cor}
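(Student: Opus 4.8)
The plan is to deduce all four identities from the two structural theorems already proved, Theorem \ref{thm701} and Theorem \ref{thm702}, together with the facts $a^\sharp=a^{\parallel a}$ and $a^\dag=a^{\parallel a^*}$ from \cite[Theorem 11]{M}. The unifying observation is that if $a$ is group invertible then $a\in\hat{\R}$ and $a$ is invertible along $d=a$, while if $a$ is Moore--Penrose invertible then $a^*\in\hat{\R}$ and $a$ is invertible along $d=a^*$; hence in each case the regularity hypothesis $d\in\hat{\R}$ of those theorems holds automatically, and each statement becomes a specialization of a result already in hand. I would split the work into the inner-inverse identities (i), (iii), handled by a single elementary lemma, and the ``sandwich'' identities (ii), (iv), handled by Theorem \ref{thm702}.

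For (i) and (iii) I would use the identity recorded at the beginning of the proof of Theorem \ref{thm701}, namely $a^{\parallel d}\overline{d}d=a^{\parallel d}=d\overline{d}a^{\parallel d}$ for any inner inverse $\overline{d}$ of $d$; this follows at once from $a^{\parallel d}\in d\R\cap\R d$, since $a^{\parallel d}=dr=sd$ forces $d\overline{d}a^{\parallel d}=dr=a^{\parallel d}$ and $a^{\parallel d}\overline{d}d=sd=a^{\parallel d}$. For (i), take $d=a$ and $\overline{d}=\overline{a}$; since $a^{\parallel a}=a^\sharp$, the identity reads $a^\sharp\overline{a}a=a^\sharp=a\overline{a}a^\sharp$, which is exactly the claim. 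For (iii), take $d=a^*$ and note that $\overline{a}^{*}$ is an inner inverse of $a^*$, because $a^*\overline{a}^{*}a^*=(a\overline{a}a)^*=a^*$; then with $a^{\parallel a^*}=a^\dag$ the identity becomes $a^\dag\overline{a}^{*}a^*=a^\dag=a^*\overline{a}^{*}a^\dag$, and rewriting $\overline{a}^{*}a^*=(a\overline{a})^*$ and $a^*\overline{a}^{*}=(\overline{a}a)^*$ gives the two asserted equalities.

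For (ii) and (iv) I would invoke Theorem \ref{thm702}: if $a$ is invertible along $d$ and $x$ is an inner inverse of $dad$, then $a^{\parallel d}=dxd$. For (ii), put $d=a$, so $dad=a^3$; if $x$ inverts $a^3$ then $a^\sharp=a^{\parallel a}=axa$. For (iv), put $d=a^*$, so $dad=a^*aa^*$; if $x$ inverts $a^*aa^*$ then $a^\dag=a^{\parallel a^*}=a^*xa^*$. The only steps requiring any care are bookkeeping: verifying that the chosen $d$ is regular and invertible along itself (or along $a^*$) so that the two theorems apply, and, in the involutive cases, pushing the involution through the inner inverse to recognize $\overline{a}^{*}$ as an inner inverse of $a^*$ and to identify $\overline{a}^{*}a^*$, $a^*\overline{a}^{*}$ with the factors $(a\overline{a})^*$, $(\overline{a}a)^*$. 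Once these observations are in place, each of the four statements is an immediate specialization and no further computation is needed.
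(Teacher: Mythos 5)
Your proposal is correct and follows essentially the same route as the paper: statements (i) and (iii) are obtained from the identity $a^{\parallel d}\overline{d}d=a^{\parallel d}=d\overline{d}a^{\parallel d}$ established at the start of the proof of Theorem \ref{thm701}, specialized to $d=a$, $\overline{d}=\overline{a}$ and $d=a^*$, $\overline{d}=(\overline{a})^*$, while (ii) and (iv) follow from Theorem \ref{thm702} with $d=a$ and $d=a^*$ respectively. Your extra bookkeeping (verifying that $(\overline{a})^*$ is an inner inverse of $a^*$ and identifying $\overline{a}^*a^*=(a\overline{a})^*$, $a^*\overline{a}^*=(\overline{a}a)^*$) is exactly what the paper leaves implicit.
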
 
\begin{proof}To prove statement (i) (respectively statement (iii)) recall that according to \cite[Theorem 11]{M} $a^\sharp=a^{\parallel a}$ (respectively $a^\dag=a^{\parallel a^*}$). Then apply $a^{\parallel d}\overline{d}d=a^{\parallel d}=d\overline{d}a^{\parallel d}$ (see the proof of Theorem \ref{thm701}) to
$d=a$ and $\overline{d}=\overline{a}$ (respectively  $d=a^*$ and $\overline{d}=(\overline{a})^*$).\par
\indent To prove statement (iii) (respectively statement (iv)), use that $a^\sharp=a^{\parallel a}$ (respectively $a^\dag=a^{\parallel a^*}$) (\cite[Theorem 11]{M}) and then apply  Theorem  \ref{thm702} with $d=a$ (respectively $d=a^*$).
\end{proof}

\bibliographystyle{amsplain}

\bigskip
\noindent Julio Ben\'{\i}tez\par
\noindent E-mail address: jbenitez@mat.upv.es \par
\medskip
\noindent Enrico Boasso\par
\noindent E-mail address: enrico\_odisseo@yahoo.it
\end{document}